\newcommand{\aut}{{\rm Aut}}
\newcommand{\Id}{{\rm Id}}
\newcommand{\car}{{\rm char}}
\newcommand{\spe}{\operatorname{Spec}}
\newcommand{\A}{{\mathbb A}}
\newcommand{\K}{{\Bbbk}}
\newcommand{\Q}{{\mathbb Q}}
\newcommand{\Z}{{\mathbb Z}}
\newcommand{\kk}{{\mathbb k}}
\newcommand{\ack}{\overline{{\mathbb k}}}
\newcommand{\pol}{{\K[x,y]}}
\newcommand{\orb}{{\rm O}}
\newcommand{\ophi}{{\rm O}_\varphi}
\newcommand{\acI}{\mathcal I_{\ack}}
\newcommand{\gal}{{\rm Gal}}
\newtheorem{thm}{Theorem}[section]
\newtheorem{pro}[thm]{Proposition}
\newtheorem{cor}[thm]{Corollary}
\newtheorem{lem}[thm]{Lemma}
\theoremstyle{definition}
\newtheorem{defi}[thm]{Definition}
\newtheorem{rem}[thm]{Remark}
\newtheorem{nota}[thm]{Notation}
\newtheorem{exa}[thm]{Example}
\newcommand{\beginenumit}{\begin{enumerate}[leftmargin=\parindent,align=left,labelwidth=\parindent,parsep=8pt,labelsep=0pt,label=\textbullet\ ]}
\newcommand{\beginenum}{\begin{enumerate}[leftmargin=\parindent,align=left,labelwidth=\parindent,parsep=8pt,labelsep=0pt,label=(\arabic*)\ ]}
\newcommand{\beginenumr}{\begin{enumerate}[leftmargin=\parindent,align=left,labelwidth=\parindent,labelsep=0pt,parsep=8pt,label=(\roman*)\ ]}
\newcommand{\beginenuma}{\begin{enumerate}[leftmargin=\parindent,align=left,labelwidth=\parindent,labelsep=0pt,parsep=8pt,label=(\alph*)\ ]}
\begin{document}

\title[]{On the orbits of plane automorphisms and their stabilizers}
\maketitle
\begin{center}
  {\sc Alvaro Rittatore}
  \ and
 {\sc Iván Pan}\footnote{Research of both authors was partially
   supported by CSIC (Udelar), ANII and PEDECIBA, of Uruguay.}
\end{center}

 \begin{abstract}
Let $\Bbbk$ be a perfect field with algebraic closure $\overline{\Bbbk}$. If $H$ is a subgroup of plane automorphisms over $\Bbbk$ and $p\in\overline{\Bbbk}^2$ is a point, we describe the subgroup consisting of plane automorphisms which stabilize the orbit of $p$ under $H$, when this orbit has irreducible closure in $\overline{\Bbbk}^2$. As an application, we treat the case where $H$ is cyclic and the closure of the orbit of $p$ is an arbitrary (non-necessarily irreducible) curve.  
\end{abstract}

\section{Introduction}
 
In \cite{BlSt} the authors study the group of automorphisms
of the affine plane preserving some given curve over an arbitrary
field $\Bbbk$. As an application of their main results, they propose a
classification of the group of automorphisms stabilizing an arbitrary
set $\Delta\subset \ack^2$ --- here $\ack$ denotes the algebraic
closure of $\Bbbk$. However, this classification (see
  \cite[Prop. 3.11]{BlSt}) is
  incomplete, since the authors assume implicitly that
  $\Delta$ is such that the Zariski closure of
$\Delta$ in $\ack^2$  coincides with the zero
locus of $I_\kk(\Delta)$,   the ideal of the $\K$--polynomials
vanishing at $\Delta$. 

Inspired by the aim of completing the  classification
  mentioned above, we have been led to consider the following problem,
which is interesting in its own right and whose study is the objective
 of  this
paper:

\begin{quote}If $H$ is a subgroup of plane
automorphisms over a perfect field $\kk$ and $p\in\ack^2$, then
describe the stabilizer of its orbit, i.e. the set of plane
automorphisms which leave invariant $\orb_H(p)=\bigl\{h(p): h\in H\bigr\}$. 
\end{quote}

 In particular, we show  that when $\Delta$ is
  not closed in $\ack^2$ an interesting phenomenon occurs: in many cases the  group
  of automorphisms stabilizing $\Delta$  is infinite countable and
  therefore not algebraic. 

In Section \ref{sec_prelim} we present the basic definitions and
preliminary results concerning the automorphisms group of a
subset of the affine plane over $\ack$.

In Section \ref{sec:orbitsubgruop} we present our main result, namely, we describe the  stabilizer of  $\orb_H(p)$ when it has irreducible closure in $\ack^2$, see Theorem \ref{thm:autorbit}. As an application, in Section \ref{sec:orbitautom} we consider the case where $H$ is a cyclic group and the closure of $\orb_H(p)$ is a (non-necessarily irreducible) curve, see Theorem \ref{thm_2}.

  \section{Preliminaries}
\label{sec_prelim}

In this section we  introduce the definition of the automorphisms group
of a subset of the affine plane $\A^2$ and recall the description of 
the automorphisms group  of a curve in $\A^2$.
  
\begin{nota} 
The following  notations  will be kept through the paper.
  \beginenum
\item In what follows $\Bbbk$ is an arbitrary  perfect field and  $\ack$ is an
  algebraic closure of $\kk$.

\item We let $\A^2=\spe\bigl(\kk[x,y]\bigr)$ and  denote  
  the set of $\kk$-points and  $\ack$-points in $\A^2$ by $\A^2(\kk)$
  and $\A^2(\ack)$ respectively.
Recall that $\A^2(\ack)$ identifies with the $\ack$-points of 
  $\A^2_{\ack}=\spe(\ack)\times_\kk \A^2$.

  \item  If  $\Delta\subset\A^2(\ack)$, we denote the \emph{vanishing
      ideals} of $\Delta$ by $\mathcal
    I(\Delta)\subset \kk[x,y]$ and $\acI(\Delta)\subset \ack[x,y]$. If
$J\subset \pol$ is an ideal, we   denote  $\mathcal
V(J)=\spe\bigl(\kk[x,y]/J\bigr)$ viewed as a closed subscheme of $\A^2$, and $\mathcal V_{\ack}(J)=\mathcal
V(J)(\ack)\subset
\A^2_{\ack}(\ack)\cong \A^2(\ack)$  the
  \emph{zero set of $J$}.

\item If $\Delta\subset \A^2(\ack)$, we set
$\widehat{\Delta}=\mathcal V_{\ack}\bigl(I(\Delta)\bigr)\subset
\A^2(\ack)$;  the Zariski
closure of $\Delta$ in $\A^2(\ack)$ is denoted by $\overline{\Delta}$
--- hence, $\overline{\Delta}\subset \widehat{\Delta}\subset  
\A^2(\ack)$.

\item The \emph{automorphisms
  group} of $\A^2$ is  denoted by $\aut(\A^2)$ --- recall that $\aut(\A^2)$ is  the (abstract) group of isomorphisms of $\kk$-schemes $f:\A^2\to \A^2$.

\end{enumerate}
\end{nota}

The following well known result, that gives further insight on the
relationship between     $\overline{\Delta}$ and $\widehat{\Delta}$,  follows from 
Galois descent (see for example \cite[Appendix A.j]{Mi}); we include
its proof for the sake of completeness.

  \begin{lem}\label{lem:pro_gal}
    Consider the canonical
    action of 
    $G=\gal(\ack/\kk)$ on $\A^2(\ack)$. If $\Delta\subset \A^2(\ack)$,
      then $\widehat{\Delta}= G\cdot     \overline{\Delta}$. In
      particular,  
    every irreducible component of $\widehat{\Delta}$ is the image of an
    irreducible component of $\overline{\Delta}$ under an element in
    $G$. 
    \end{lem}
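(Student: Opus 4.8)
The plan is to establish the two inclusions $G\cdot\overline{\Delta}\subseteq\widehat{\Delta}$ and $\widehat{\Delta}\subseteq G\cdot\overline{\Delta}$ separately, the second being the substantial one and the place where Galois descent enters. Throughout I use the compatibility of the two $G$-actions: for $\sigma\in G$, $f\in\ack[x,y]$ and $p\in\A^2(\ack)$ one has $\sigma(f)\bigl(\sigma(p)\bigr)=\sigma\bigl(f(p)\bigr)$, so that $f(p)=0$ if and only if $\sigma(f)(\sigma(p))=0$; in particular this yields $\mathcal V_\ack\bigl(\sigma(\acI(\Delta))\bigr)=\sigma\bigl(\mathcal V_\ack(\acI(\Delta))\bigr)=\sigma(\overline{\Delta})$. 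Since $\ack[x,y]$ is Noetherian, $\acI(\Delta)$ is generated by finitely many polynomials, whose coefficients lie in a finite Galois extension $\kk'/\kk$; hence only finitely many of the ideals $\sigma(\acI(\Delta))$ are distinct, a reduction I will use to keep all intersections and unions finite.

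For the inclusion $G\cdot\overline{\Delta}\subseteq\widehat{\Delta}$, I first record that $\mathcal I(\Delta)=\acI(\Delta)\cap\kk[x,y]\subseteq\acI(\Delta)$, so every $f\in\mathcal I(\Delta)$ vanishes on $\overline{\Delta}=\mathcal V_\ack(\acI(\Delta))$. As the coefficients of such an $f$ are fixed by $G$, the displayed compatibility gives $f(\sigma(p))=\sigma(f(p))=0$ for all $p\in\overline{\Delta}$ and $\sigma\in G$, whence $\sigma(\overline{\Delta})\subseteq\widehat{\Delta}$ and therefore $G\cdot\overline{\Delta}\subseteq\widehat{\Delta}$.

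The inclusion $\widehat{\Delta}\subseteq G\cdot\overline{\Delta}$ is the crux. I would set $J=\bigcap_{\sigma\in G}\sigma(\acI(\Delta))\subseteq\ack[x,y]$, which is $G$-stable by construction. A direct check shows $J\cap\kk[x,y]=\mathcal I(\Delta)$: an element $f\in\kk[x,y]$ lies in $\sigma(\acI(\Delta))$ if and only if $f=\sigma^{-1}(f)\in\acI(\Delta)$, so $\sigma(\acI(\Delta))\cap\kk[x,y]=\acI(\Delta)\cap\kk[x,y]=\mathcal I(\Delta)$ for every $\sigma$. Because $\kk$ is perfect, $\ack/\kk$ is Galois, and Galois descent for the semilinear $G$-action on $\ack[x,y]$ identifies the $G$-stable ideals with the extensions of ideals of $\kk[x,y]$; applied to $J$ this gives $J=\bigl(J\cap\kk[x,y]\bigr)\cdot\ack[x,y]=\mathcal I(\Delta)\cdot\ack[x,y]$. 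Taking zero sets and using the finite reduction together with $\mathcal V_\ack(I\cap I')=\mathcal V_\ack(I)\cup\mathcal V_\ack(I')$, I obtain $\widehat{\Delta}=\mathcal V_\ack(\mathcal I(\Delta))=\mathcal V_\ack(\mathcal I(\Delta)\cdot\ack[x,y])=\mathcal V_\ack(J)=\bigcup_{\sigma\in G}\sigma(\overline{\Delta})=G\cdot\overline{\Delta}$. The main obstacle is exactly this descent identity $J=\mathcal I(\Delta)\cdot\ack[x,y]$; the finiteness reduction is essential at this point, since for infinitely many distinct factors the equality $\mathcal V_\ack(\bigcap_\sigma I_\sigma)=\bigcup_\sigma\mathcal V_\ack(I_\sigma)$ may fail.

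For the final assertion, write $\overline{\Delta}=C_1\cup\dots\cup C_r$ as its (finitely many) irreducible components. Each $\sigma\in G$ acts as a homeomorphism of $\A^2(\ack)$ for the Zariski topology, so every $\sigma(C_i)$ is irreducible and closed, and by the reduction above the collection $\{\sigma(C_i):\sigma\in G,\ 1\le i\le r\}$ is finite. Then $\widehat{\Delta}=G\cdot\overline{\Delta}=\bigcup_{\sigma,i}\sigma(C_i)$ is a finite union of irreducible closed sets, so its irreducible components are precisely the maximal members of this collection; in particular each one is of the form $\sigma(C_i)$, that is, the image under an element of $G$ of an irreducible component of $\overline{\Delta}$.
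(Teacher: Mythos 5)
Your proof is correct, and it takes a genuinely different route from the paper's. The paper argues top--down: it observes that $\widehat{\Delta}$ is a $G$-stable closed subset of $\A^2(\ack)$, invokes Galois descent of closed subschemes to produce a $\kk$-form $X$ of $\widehat{\Delta}$, and then relates $X$ to $\overline{\Delta}$ via fixed points of the Galois action. You instead work bottom--up and entirely with ideals: you form $J=\bigcap_{\sigma\in G}\sigma\bigl(\acI(\Delta)\bigr)$, check that $J\cap\kk[x,y]=\mathcal I(\Delta)$, apply the linear-algebra form of Galois descent (a $G$-stable $\ack$-subspace of $\kk[x,y]\otimes_\kk\ack$ is spanned by its $G$-invariants, which is where perfectness of $\kk$ enters through $\ack^G=\kk$) to get $J=\mathcal I(\Delta)\cdot\ack[x,y]$, and then pass to zero sets. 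The two approaches rest on the same descent principle, but yours buys a more self-contained and verifiable argument: the finiteness reduction (only finitely many conjugate ideals $\sigma(\acI(\Delta))$, since the generators live in a finite Galois subextension) is exactly what legitimizes the step $\mathcal V_\ack(J)=\bigcup_\sigma\sigma(\overline{\Delta})$, and it is a point worth making explicit. It also sidesteps any appeal to $\kk$-rational points of $\widehat{\Delta}$ --- which, as the paper's own Example \ref{eje1} shows, may not exist --- whereas the paper's proof passes through the set $X\cap\A^2(\kk)$. Your treatment of the final assertion on irreducible components (homeomorphy of the $G$-action plus finiteness of the distinct conjugates $\sigma(\overline{\Delta})$, so that $\widehat{\Delta}$ is a finite union of the irreducible closed sets $\sigma(C_i)$ and its components are the maximal ones among them) is likewise complete and slightly more explicit than the paper's one-line appeal to descent.
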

    \begin{proof}
It is clear that $\widehat{\Delta}\subset \A^2(\ack)$ is a
$G$--stable closet subset 
containing $\overline{\Delta}$. Therefore, by Galois descent, 
there exists an unique closed subscheme $X\subset \operatorname{Spec}\bigl(\kk[x,y]\bigr)$ such 
that $\widehat{\Delta}$ identifies with the set of $\ack$-points of
$\operatorname{Spec}(\ack)\times_{\operatorname{Spec}(\kk)} X$.  Under the identification $\A^2(\kk)\subset
\A^2(\ack)$, we have that  $X(\kk)$ is the set of $\ack$-points
 of $\widehat{\Delta}$ fixed by the Galois action and
$\widehat{\Delta}=G\cdot \bigr(X\cap 
\A^2(\kk)\bigr)$. It follows that $X(\kk)= 
\overline{\Delta}\cap \A^2(\kk)$ and  $G\cdot
\overline{\Delta}=\widehat{\Delta}$. The last assertion is an easy
consequence of Galois descent.
\end{proof}

We introduce now the main object of study of this work, namely the
group of automorphisms that stabilize an arbitrary  subset of
$\ack$-points of the
affine plane.

\begin{defi}
Let $a:\aut(\A^2)\times \A^2_{\ack}(\ack)\to \A^2_{\ack}(\ack)$, $a(f,
p)= f(\ack)(p)$,   be the
canonical action of $\aut(\A^2)$ on $\A^2(\ack)$. If $\Delta\subset \A^2(\ack)$, we denote the \emph{stabilizer} of $\Delta$ 
under $a$ by   $\aut(\A^2,\Delta)$. 
\end{defi}

\begin{rem}
   \label{rem:invariances}
   \beginenum
 \item Recall that an automorphism $\varphi\in \aut(\A^2)$ is given by a
   pair $(f,g)\in \kk[x,y]\times \kk[x,y]$, such that the
   corresponding endomorphism of $\A^2_{\ack}$ is an automorphism,
   with inverse  $
(f,g)^{-1}= (h,j) \in \kk[x,y]\times \kk[x,y]$.
Under this identification, $a\bigl((f,g),p\bigr)=\bigl(f(p),g(p)\bigr)$.

\item Let $\Delta\subset\A^2(\ack)$ and    $\varphi\in
\aut(\A^2,\Delta)$. If  $\varphi^*:\ack[x,y]\to \ack[x,y]$ denotes the induced automorphism (which 
is defined over $\kk$),  then $\varphi^*$ stabilizes  the ideals 
$ \mathcal I(\Delta)$ and  $ \acI(\Delta)$.
If follows that $\varphi$ stabilizes $\overline{\Delta}$ and
$\widehat{\Delta}$;  therefore $\varphi^*$ also stabilizes the ideals
$\mathcal I(\widehat{\Delta})$ and and  $\acI(\widehat{\Delta})$.

 In other words: 
  \begin{equation}
    \label{eqinc2}
    \aut(\A^2,\Delta)\subset 
    \aut\bigl(\A^2,\overline{\Delta}\bigr)\subset \aut\bigl(\A^2,\widehat{\Delta}\bigr).
    \end{equation}
\end{enumerate}
  \end{rem}

  \begin{nota} If $H\subset \aut(\A^2)$ is a subgroup
and $p\in \A^2(\ack)$, we denote the \emph{$H$-orbit} of $p$ as $\orb_H(p)$. 

   \end{nota}

\begin{exa}\label{eje1}
It is well known that the inclusions \eqref{eqinc2}  may be
strict. For example if  $\kk=\Q$ 
and $\Delta=\bigl\{(\sqrt{2},0)\bigr\}$, then $\widehat{\Delta}=\bigl\{ (\sqrt{2},0), (-\sqrt{2},0)\bigr\}$
 and 
$\varphi=(-x,y+x^2-2)\in \aut(\A^2,\widehat{\Delta})\setminus \aut(\A^2,\Delta)$.

The fact that the first inclusion may be strict is a key
point in our study of the stabilizers of orbits.

  \end{exa}

   In \cite{BlSt} the authors study the stabilizer of a closed subset
   of the affine plane. In particular, they
  give an explicit description of the geometrically irreducible curves $\mathcal C$
  such that  $\aut\bigl(\A^2,\mathcal C(\ack)\bigr)$ is an algebraic
  group. We briefly recall now these results --- according to Remark \ref{rem:invariances}, we present an
  element of $\aut(\A^2)$ as a pair of polynomials $(f,g)\in \kk[x,y]$.

\begin{rem} If  $\Delta\subset \A^2({\ack})$ is finite, then
  $\aut(\A^2,\Delta)$ is not an algebraic group (see
  \cite[Proposition 3.11]{BlSt}) --- notice that 
  if   $\Delta=\bigl\{(a_i,b_i)\in\A^2(\ack); i=1,\ldots, \ell\bigr\}$
  and $m\gg   
  0$, then   there exists a polynomial $P_m\in\kk[x]$ of degree $m$ such that
  $P(a_i)=0$ for any $i=1,\ldots,\ell$. Therefore,  $\aut_\kk(\A^2,\Delta)$
  contains the automorphisms $\bigl(x,y+P_m(x)\bigr)\in\pol$ for all
  $m\gg 0$, and it follows from  \cite[Theorem 1.3]{Ka} that $\aut_\kk(\A^2,\Delta)$
is not algebraic.
\end{rem}

  \begin{thm}
    \label{thm:blancprin}
Let   $\mathcal C\subset \A^2$ be a geometrically
irreducible and reduced  curve. Then, up to applying a plane automorphism,
$\mathcal C$ is one of the curves of the following list: 

    \beginenum

    \item $\mathcal C=\mathcal V\bigl(x^b-\lambda y^a  \bigr)$, where
      $a,b>1$ are coprime integers and $\lambda\in \kk^*$. If this is
      the case, then
      \[
\aut\bigl(\A^2,\mathcal C(\ack)\bigr) = \bigl\{ (t^ax,t^by)\mathrel{:}
t\in\kk^*\bigr\}\cong \kk^*.
        \]

\item $\mathcal C=\mathcal V\bigl(x^b y^a -\lambda  \bigr)$, where
      $a,b\geq 1$ are coprime integers, with $ab\neq 1$, and $\lambda\in \kk^*$. If this is
      the case, then
      \[
\aut\bigl(\A^2,\mathcal C(\ack)\bigr) = \bigl\{ (t^ax,t^{-b}y)\mathrel{:}
t\in\kk^*\bigr\}\cong \kk^*.
        \]

\item $\mathcal C=\mathcal V\bigl(xy -\lambda  \bigr)$, where
       $\lambda\in \kk^*$. If this is
      the case, then
      \[
\aut\bigl(\A^2,\mathcal C(\ack)\bigr) = \bigl\{ (tx,t^{-1}y)\mathrel{:}
t\in\kk^*\bigr\}\rtimes \{\operatorname{id},\sigma\}\cong \kk^*\rtimes \Z_2,
        \]
where $\sigma(x,y)=(y,x)$ is the permutation of coordinates.

\item   $\mathcal C=\mathcal V\bigl(\lambda
  x^2+\nu y^2-1\bigr)$, where $\car(\kk)\neq 2$, and   $\lambda,\nu\in\kk^*$  are such that 
  $-\lambda\nu$ is not a square in $\kk$. If this is
      the case, then $\aut\bigl(\A^2,\mathcal C(\ack)\bigr)$
       is the subgroup of 
       $\operatorname{GL}_2(\kk)$ preserving the form $\lambda
  x^2+\nu y^2$ : 
      \[
\aut\bigl(\A^2,\mathcal C(\ack)\bigr) = \bigl\{\left(\begin{smallmatrix} a& -\nu b\\
      \lambda b& a\end{smallmatrix}\right)
  \mathrel{:}
   (a,b)\in\kk\times\kk\,,\ 
   a^2+\lambda\nu b^2=1\bigr\}\rtimes
 \left\{\operatorname{id},\tau \right\}
=T_{\lambda,\nu}\rtimes \Z_2,
        \]
where $\tau(x,y)=(x,-y)$. Notice that $T_{\lambda,\nu}$ is a non-$\kk$-split torus.

\item $\mathcal C=\mathcal V\bigl(x^2+\mu xy+
  y^2-1\bigr)$, where  $\car(\kk)= 2$ and    $\mu\in\Bbbk^*$
                 is such that  $x^2+\mu
                 x+1$ has no root in $\kk$.  If this is
      the case, then $\aut\bigl(\A^2,\mathcal C(\ack)\bigr)$
       is the subgroup of 
       $\operatorname{GL}_2(\kk)$ preserving the form $x^2+\mu xy+
  y^2$: 
  \[
    \aut\bigl(\A^2,\mathcal C(\ack)\bigr) = \left\{
                 \left(\begin{smallmatrix} a&  b\\
                      b& a+\mu b\end{smallmatrix}\right)\mathrel{:}
a,b\in \kk\,, a^2+\mu ab + b^2=1\right\}\rtimes  \left\{\operatorname{id},\sigma_\mu\right\} =T_\mu\rtimes
              \Z_2,
            \]
where $\sigma_\mu(x,y)=(x+ \mu y,y)$.
Notice that $T_{\mu}$ is a non-$\kk$-split torus.

\item $\mathcal C$ is the line of equation $x=0$, in which case
  \[
  \aut\bigl(\A^2,\mathcal C(\ack)\bigr)=\Bigl\{
  \bigl(ax,by+P(x)\bigr) \mathrel{:} a,b\in\kk^*\,,\ P\in\kk[x]\Bigr\}.
\]

In particular, if $R:\aut\bigl(\A^2, \mathcal C(\ack)\bigr)\to
\aut(\A^1)=\bigl\{ y\mapsto by+c\mathrel{:} b\in\kk^*\,,
c\in\kk\bigr\}$ denotes the restriction map, then 
$
  \operatorname{Ker}(R)=
\Bigl\{(x,y) \mapsto \bigl(ax,y+P(x)\bigr) \mathrel{:} a\in\kk^*\,,\
P\in\kk[x]\,,\ P(0)=0\Bigr\}$
and

\[
    \aut\bigl(\A^2,\mathcal
    C(\ack)\bigr)=\operatorname{Ker}(R)\rtimes \aut(\A^1).
\]

 \item  In any other case, the curve $\mathcal C$ is  such that $\aut\bigl(\A^2,\mathcal C(\ack)\bigr)$ is finite.
                 
         \end{enumerate}
\end{thm}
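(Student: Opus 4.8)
The plan is to reduce the classification to the analysis of the connected one-parameter algebraic groups acting on $\A^2$ and preserving $\mathcal C$, exploiting the amalgamated structure of $\aut(\A^2)$. Set $G=\aut\bigl(\A^2,\mathcal C(\ack)\bigr)$. If $G$ is finite we are in case (7), so assume $G$ infinite. By the Jung--van der Kulk theorem (valid over any field), $\aut(\A^2)$ is the amalgamated product of the affine group $A$ and the de Jonquières (triangular) group $E$ over their intersection, and therefore acts on the associated Bass--Serre tree $\calt$. The first step is to show that, after conjugation, $G$ is contained in $A$ or in $E$: since $\mathcal C$ has a fixed degree and $g(\mathcal C)=\mathcal C$ for every $g\in G$, no element of $G$ can be loxodromic, because a loxodromic (H\'enon-type) automorphism satisfies $\deg(g^n)=(\deg g)^n\to\infty$ and admits no invariant curve; hence every element of $G$ is elliptic, and by Serre's theorem $G$ fixes a vertex or an end of $\calt$, which in either case places $G$ inside a conjugate of $A$ or of $E$.

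Once $G\subseteq A$ or $G\subseteq E$, the second step is to extract a nontrivial connected one-parameter subgroup acting on $\A^2$ and preserving $\mathcal C$. An infinite subgroup of the algebraic group $A$, or of $E$ with its degree filtration, always contains such a subgroup, which over $\ack$ is isomorphic either to $\G_a$ or to a one-dimensional torus $T$ (split, or non-split when $\kk\neq\ack$). This reduces the classification to determining the geometrically irreducible curves invariant under each type of one-parameter action, together with their full stabilizers.

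The third step is this case analysis. If $\G_a$ acts, then by Rentschler's theorem and its positive-characteristic analogue the action is conjugate to $t\cdot(x,y)=(x,y+t\,q(x))$; an irreducible invariant curve other than a fibre of the projection $x$ is a graph, hence equivalent to the line $x=0$, and a direct computation of the triangular automorphisms preserving $x=0$ gives the semidirect decomposition of case (6). If a split torus $\K^*$ acts, the action is linearizable, so up to conjugation it reads $t\cdot(x,y)=(t^a x,t^b y)$; inspecting the invariant binomials according to the signs of the weights $a,b$ yields exactly the normal forms of cases (1)--(3), the coordinate swap $\sigma$ appearing precisely when $a=b=1$. If a non-split torus $T$ acts, it preserves an anisotropic quadratic form over $\kk$, and the invariant conic is $\lambda x^2+\nu y^2=1$ in characteristic $\neq2$ or $x^2+\mu xy+y^2=1$ in characteristic $2$; its stabilizer is the corresponding orthogonal group, giving cases (4)--(5).

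The main obstacle is the first step: ruling out loxodromic elements and handling the end-fixing alternative so as to land inside a vertex stabilizer. This is precisely where the hypothesis that $\mathcal C$ is a genuine curve of fixed degree enters, through the absence of periodic algebraic curves for H\'enon-type automorphisms. A secondary difficulty is the book-keeping in the torus case: separating split from non-split tori, and isolating the characteristic-$2$ phenomenon, relies on the description of one-dimensional tori over $\kk$ and of the anisotropic forms they preserve, which is what distinguishes cases (1)--(3) from (4)--(5).
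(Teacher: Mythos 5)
The paper does not actually prove this statement: it is quoted from Blanc--Stampfli and the ``proof'' is the single citation \cite[Theorem 2]{BlSt}. Your proposal is therefore an attempt to reprove their theorem, and while the overall strategy (Jung--van der Kulk, the action on the Bass--Serre tree of the amalgam, reduction to the affine or triangular subgroup, then a one-parameter-group analysis of invariant curves) is the standard route in this circle of ideas, two of your steps contain genuine gaps. The first is the reduction ``every element of $G$ is elliptic, hence by Serre's theorem $G$ fixes a vertex or an end, which in either case places $G$ inside a conjugate of $A$ or of $E$.'' Serre's fixed-point theorem yields a fixed vertex only for finitely generated groups (or groups with a bounded orbit); a group all of whose elements are elliptic may fix only an end, and a group fixing an end of this tree is an increasing union of subgroups of vertex stabilizers without lying in any single one. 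Since $\aut\bigl(\A^2,\mathcal C(\ack)\bigr)$ is typically not finitely generated (in case (6) it contains $\operatorname{Ker}(R)\supset\{(x,y+P(x)): P\in\kk[x], P(0)=0\}$), this case cannot be waved away; handling it is precisely the technical heart of the Lamy/Blanc--Stampfli arguments, which use the behaviour of $\mathcal C$ at infinity rather than the bare tree action. Relatedly, the nonexistence of invariant algebraic curves for loxodromic automorphisms is itself a nontrivial theorem (Bedford--Smillie over $\C$, and one needs a version over an arbitrary algebraically closed field); it does not follow from the degree growth $\deg(g^n)=(\deg g)^n$ alone and must be cited or proved.

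The second gap is the assertion that ``an infinite subgroup of the algebraic group $A$, or of $E$ with its degree filtration, always contains a nontrivial connected one-parameter subgroup.'' This is false: the infinite cyclic group generated by a diagonal matrix of infinite order contains no positive-dimensional algebraic subgroup. What you need is that the Zariski closure of $G$ still stabilizes $\mathcal C(\ack)$ and has positive dimension, so that its identity component contains a $\G_a$ or a one-dimensional torus defined over the perfect field $\kk$; but even this repair only works when $G$ has bounded degree, so that the closure is an honest algebraic group. For subgroups of $E$ of unbounded degree --- exactly the situation producing case (6) --- the closure lives in an infinite-dimensional ind-group and a separate argument is needed to conclude that $\mathcal C$ is a line. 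Your step 3 is essentially sound once these reductions are secured (modulo the positive-characteristic normal form for $\G_a$-actions, which involves $p$-polynomials and is not simply $(x,y+tq(x))$, and modulo checking that the \emph{full} stabilizer of each normal form, not just its triangular or affine part, is as listed). As written, however, the proposal is a plausible outline rather than a proof, and the two gaps above are exactly the points where the real work of \cite{BlSt} is done.
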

\begin{proof}
  See \cite[Theorem 2]{BlSt}.
  \end{proof}
\begin{defi}
 We will abuse notations and  say that
 $D\subset \A^2(\ack)$  is a \emph{curve of type} (1) -- (7), if $D=\mathcal
 C(\ack)$, where  $\mathcal C$ verifies the  respective condition
 in  Theorem \ref{thm:blancprin}. The \emph{canonical form} of a  curve
   of type (1) -- (6) is the curve of equations given in the Theorem
   \ref{thm:blancprin} for the corresponding type.

Following \cite{BlSt}, we say that a curve $\mathcal C$ is of
\emph{fence type} if up to applying an automorphism of $\aut(\A^2)$,
$\mathcal C$ has equation $P(x)=0$, with $P\in\kk[x]$.  
   
  \end{defi}

\begin{cor}
    Assume that $\Delta$ is a proper closed subset of $\A^2(\ack)$. Then one of the following assertions holds:

    $(a)$  $\Delta$  is contained in a  curve of fence type, in which case $\aut(\A^2,\Delta)$ is not an algebraic group. 

    $(b)$ The group  $\aut(\A^2,\Delta)$ is conjugate to an algebraic subgroup of either ${\rm Aff}(\A^2)$ or either $J_n$ for some $n\geq 2$ (notations as in \cite{BlSt}). 
    \end{cor}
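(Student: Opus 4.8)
The plan is to split according to whether $\Delta$ is contained in a curve of fence type, and to reduce everything to the description of stabilizers of geometrically irreducible curves given by Theorem \ref{thm:blancprin}. First I would record an elementary reduction: $\Delta$ lies in a fence-type curve if and only if its one-dimensional part $\mathcal C$ does (the union of the one-dimensional irreducible components of the closed set $\Delta$). Indeed, every coordinate of a point of $\A^2(\ack)$ is algebraic over $\kk$, so any finite set lies in some fence $\{P(x)=0\}$ with $P\in\kk[x]$, and one may always enlarge $P$ to absorb the isolated points of $\Delta$ once $\mathcal C$ is known to lie in a fence. In particular a finite $\Delta$ always falls into situation $(a)$, while if $\mathcal C=\emptyset$ we are done.

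For assertion $(a)$, suppose $\psi\in\aut(\A^2)$ satisfies $\psi(\Delta)\subset\{P(x)=0\}$ with $P\in\kk[x]$. The triangular automorphisms $\bigl(x,y+Q(x)P(x)\bigr)$, $Q\in\kk[x]$, are defined over $\kk$ and fix $\{P(x)=0\}$ pointwise, hence fix $\psi(\Delta)$; conjugating by $\psi^{-1}$ produces a family of elements of $\aut(\A^2,\Delta)$ of unbounded degree, so $\aut(\A^2,\Delta)$ is not algebraic by \cite[Theorem 1.3]{Ka}. This settles $(a)$.

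For assertion $(b)$ I would assume that $\Delta$ is contained in no fence-type curve and aim to show that $\aut(\A^2,\Delta)$ has bounded degree, after which the conclusion follows from the Jung--van der Kulk structure of $\aut(\A^2)$ as used in \cite{BlSt}: a bounded (equivalently algebraic) subgroup is conjugate to a subgroup of ${\rm Aff}(\A^2)$ or of $J_n$ for some $n\geq 2$. Since $\aut(\A^2,\Delta)\subset\aut\bigl(\A^2,\widehat{\Delta}\bigr)$ by \eqref{eqinc2}, and $\widehat{\Delta}$ is defined over $\kk$ and is still not of fence type, I may replace $\Delta$ by $\widehat{\Delta}$ and assume $\Delta$ is $G$-stable, $G=\gal(\ack/\kk)$. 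Its curve part $\mathcal C$ is then nonempty and $G$-stable, and $\aut(\A^2,\Delta)$ permutes the finitely many geometric components of $\mathcal C$; passing to the finite-index subgroup $G_0$ fixing each of them, and observing that every element of $\aut(\A^2,\Delta)$ commutes with $G$ and so $G_0$ fixes each Galois conjugate as well, it suffices to bound the degree on $G_0$.

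Here the argument splits, and this is where I expect the real work to lie. If some geometric component $\mathcal C_i$ of $\mathcal C$ is not a line, then extending scalars to $\ack$ and applying Theorem \ref{thm:blancprin} over the algebraically closed field $\ack$ shows that $\aut_{\ack}\bigl(\A^2_{\ack},\mathcal C_i\bigr)$ is algebraic (type $(1)$--$(5)$ or $(7)$), so $G_0$ has bounded degree. If instead every geometric component of $\mathcal C$ is a line, then these lines cannot all be parallel: otherwise their common point at infinity would be $G$-stable, hence $\kk$-rational since $\kk$ is perfect, and a $\kk$-linear change of coordinates would present $\mathcal C$ as $\{P(x)=0\}$ with $P\in\kk[x]$ --- here Lemma \ref{lem:pro_gal} guarantees that the resulting $G$-stable set of vertical lines has defining polynomial in $\kk[x]$ --- contradicting that $\Delta$ is not of fence type. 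Fixing two non-parallel lines and computing over $\ack$ via case $(6)$ of Theorem \ref{thm:blancprin} shows that the automorphisms fixing both are the diagonal torus, so again $G_0$ has bounded degree. In either case $\aut(\A^2,\Delta)$ is bounded and the structure theorem yields $(b)$. The delicate point is precisely this ``all components are lines'' case: making rigorous, over a general perfect field, that a non-fence configuration of geometric lines must contain two non-parallel ones and controlling the Galois action on the points at infinity; the reduction to $\widehat{\Delta}$ is exactly what makes the descent of Lemma \ref{lem:pro_gal} available there.
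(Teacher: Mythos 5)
The paper's own proof is a one-line reduction: once $\Delta$ is assumed closed, the proof of \cite[Proposition 3.11]{BlSt} applies verbatim, so all the hard work is delegated to Blanc--Stampfli. Your proposal instead tries to reprove the statement from scratch out of Theorem \ref{thm:blancprin}, and while the reduction to $\widehat{\Delta}$, the treatment of case $(a)$ via shears $\bigl(x,y+Q(x)P(x)\bigr)$ and \cite[Theorem 1.3]{Ka}, and the case where some geometric component has algebraic stabilizer over $\ack$ are all sound, there is a genuine gap exactly at the point you flag as delicate.

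The problem is that in the remaining case the components of the curve part are of type (6), i.e.\ each is the image of the line $x=0$ under \emph{some} automorphism of $\A^2$ --- they are only \emph{individually} rectifiable. Your argument silently upgrades this to ``every geometric component is a (literal) line,'' so that the configuration is a finite union of affine lines and the dichotomy ``all parallel (common point at infinity, hence a fence) versus two non-parallel lines (joint stabilizer the diagonal torus)'' applies. But a union such as $\{x=0\}\cup\{y=x^2\}$, or a family of pairwise disjoint rectifiable curves like $\{x=0\}\cup\{x=1\}\cup\{x=y^2+c\}$, consists entirely of type-(6) components without being a union of lines in any single coordinate system; there are no ``points at infinity'' to compare, and deciding whether such a configuration is a fence --- and bounding the degree of its stabilizer when it is not --- is precisely the main technical content of \cite[Theorem 1]{BlSt}. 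Your sketch does not address simultaneous rectifiability at all, so the ``all components of type (6)'' case is not actually proved; as written, the argument only covers configurations that are already unions of genuine lines. To close the gap you would either have to reprove the relevant part of \cite[Theorem 1]{BlSt} (analysis of fences and of non-fence unions of rectifiable curves) or do what the paper does and cite it directly.
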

    \begin{proof}
      With the added hypothesis of the closedness of $\Delta$, the
      proof of  \cite[Proposition 3.11]{BlSt} now applies without obstructions.   
      \end{proof}

  \section{The orbit of a group of automorphisms of the plane}
\label{sec:orbitsubgruop}

Let $H\subset\aut(\A^2)$ be a subgroup and $p\in\A^2(\ack)$. We are
interested now in the calculation of
$\aut\bigl(\A^2,\orb_H(p)\bigr)$. Notice that  we have that
  \[
    H\subset \aut\bigl(\A^2,\orb_H(p)\bigr)\subset
    \aut\bigl(\A^2,\overline{\orb_H(p)}\bigr).
  \]

  \begin{lem}
    \label{lem:closorb}
Let $H\subset \aut(\A^2)$ be a subgroup and $p\in \A^2(\ack)$. Then
$\overline{\orb_H(p)} $ is either:

\beginenum
\item a finite set;

  \item a curve $\mathcal C=\mathcal C_1\cup\dots\cup \mathcal C_s$,
    with $\mathcal C_i\cong \mathcal C_j$ for all $i,j$, where
    $\mathcal C_1$ is a curve of type (1) -- (6);

  \item $\A^2(\ack)$.
    \end{enumerate}
    \end{lem}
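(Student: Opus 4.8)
The plan is to split the analysis according to the dimension of the closed set $X:=\overline{\orb_H(p)}\subset\A^2(\ack)$. Since $X$ is closed in the irreducible surface $\A^2(\ack)$, each irreducible component has dimension $0$, $1$ or $2$. If some component has dimension $2$ it must be all of $\A^2(\ack)$, giving case (3); if every component has dimension $0$ then $X$ is finite, giving case (1). So the substance lies in the intermediate case, where I must show that $X$ is a \emph{pure} one-dimensional set whose components are mutually isomorphic curves, one (hence each) of type (1)--(6). Throughout I use that, since $H$ stabilizes $\orb_H(p)$, it stabilizes its closure $X$, so each $h\in H$ restricts to an automorphism of $X$ permuting its irreducible components.

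The first point is purity. I would exploit that $X$ is the closure of a single $H$-orbit and that $H$ acts on $\A^2(\ack)$ by homeomorphisms, so all points of $\orb_H(p)$ are topologically indistinguishable: either all are isolated in $X$ or none is. A Noetherian space has only finitely many irreducible components, hence only finitely many isolated points; thus if $\orb_H(p)$ is infinite no orbit point is isolated. As $\orb_H(p)$ is dense, every irreducible component of $X$ meets it, because the complement of the other components inside a given one is a nonempty open subset of $X$. Consequently no component can be a single point, and in the intermediate case $X=\mathcal C_1\cup\dots\cup\mathcal C_s$ is a finite union of curves.

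Next, transitivity on components. Writing $U_i=\mathcal C_i\setminus\bigcup_{j\neq i}\mathcal C_j$, each $U_i$ is a nonempty open subset of $X$, dense in $\mathcal C_i$, and hence meets $\orb_H(p)$. Given $q\in U_i\cap\orb_H(p)$ and $q'\in U_j\cap\orb_H(p)$, choose $h\in H$ with $h(q)=q'$; since $h$ permutes the components and $h(\mathcal C_i)$ is the unique component passing through $q'\in U_j$, we obtain $h(\mathcal C_i)=\mathcal C_j$. Thus $H$ permutes $\{\mathcal C_1,\dots,\mathcal C_s\}$ transitively, so $\mathcal C_i\cong\mathcal C_j$ for all $i,j$ via elements of $\aut(\A^2)$. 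Taking $i=j=1$ in the same argument shows that $U_1\cap\orb_H(p)$ is a single orbit of the stabilizer $H_1=\{h\in H:h(\mathcal C_1)=\mathcal C_1\}$, and this orbit is dense in $\mathcal C_1$.

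Finally I would invoke the classification. As $\mathcal C_1$ is an irreducible curve over the algebraically closed field $\ack$, it is infinite, so a dense $H_1$-orbit forces $H_1$, and a fortiori $\aut\bigl(\A^2,\mathcal C_1(\ack)\bigr)\supseteq H_1$, to be infinite. By Theorem \ref{thm:blancprin} a geometrically irreducible reduced curve with infinite stabilizer is exactly one of the types (1)--(6), type (7) being excluded since its stabilizer is finite; hence $\mathcal C_1$ is of type (1)--(6) and case (2) follows. The step I expect to need the most care is precisely this last one: Theorem \ref{thm:blancprin} is phrased for curves defined over $\kk$ and up to $\kk$-automorphism, whereas $\mathcal C_1$ is a priori only an $\ack$-component of $X$ --- indeed Example \ref{eje1} and Lemma \ref{lem:pro_gal} show that $\overline{\orb_H(p)}$ need not be $\kk$-rational. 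I must therefore either descend $\mathcal C_1$ to $\kk$ or, more robustly, apply the classification over $\ack$ and read ``type (1)--(6)'' in the geometric sense compatible with the definition following Theorem \ref{thm:blancprin}.
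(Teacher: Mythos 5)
Your proof is correct and follows essentially the same route as the paper's: reduce to the one-dimensional case, show that $H$ permutes the irreducible components of $\overline{\orb_H(p)}$ transitively, deduce that the stabilizer of one component is infinite, and invoke Theorem \ref{thm:blancprin}. You supply more detail than the paper on the purity and transitivity steps (the paper dispatches these with ``by continuity and irreducibility''), and you rightly flag the $\kk$-versus-$\ack$ rationality issue in the final appeal to the classification, a point the paper's proof also passes over silently.
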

    \begin{proof}
       Assume that
      $\overline{\orb_H(p)}$ is neither finite nor all the
      plane; then $\dim \overline{\orb_H(p)}=1$ and therefore  $H$ is
      necessarily infinite.   Let
      $\overline{\orb_H(p)}=\bigcup_{i=1}^s \mathcal C_i$ be the
      decomposition in irreducible components, with $\dim
      \mathcal C_1=1$.  By continuity and the
      irreducibility of $\mathcal C_1$, it follows that  there exists $h_i\in
      H$ such that  $h_i\cdot \mathcal C_1=\mathcal C_i$. Hence, it remains to prove
      that $\mathcal C_1$ is a curve of type (1) -- (6). 

Let $H_i=\{ h\in
H\mathrel{:} h\cdot  \mathcal C_1= \mathcal C_i\}\subset H$, $i=1,\dots
s$.  Since $H$ is infinite, there exists $i\in\{1,\dots,s\}$ such that $\#
H_i=\infty$. Let $h_i\in H_i$ be a fixed element. Then 
$h_i^{-1}H_i\subset H_1\subset \aut(\A^2,\mathcal C_1)$, and it follows
that $\#\aut(\A^2,\mathcal C_1)=\infty$. We deduce from Theorem  \ref{thm:blancprin}
that $\mathcal C_1$  is a curve of type (1) -- (6).
    \end{proof}

We collect now some technical remarks that we need in order
to calculate the stabilizers of an orbit.

\begin{lem}
  \label{lem:isotropia}
Let $\mathcal C$ be a curve of type (1) -- (6) given by its canonical
form and $p=(x_p,y_p)\in \mathcal C(\ack)$. If we keep the notations
of Theorem \ref{thm:blancprin}, then the isotropy group $\aut(\A^2,\mathcal C)_p$ is trivial unless

\beginenumr

\item $\mathcal C=\mathcal V\bigl(xy -\lambda  \bigr)$,  in which case
  $\aut\bigl(\A^2,\mathcal C(\ack)\bigr)_p=\bigl\{\Id, \frac{x_p}{y_p}\sigma\bigr\}$,
where $\sigma=(y,x)$ is the permutation of coordinates.

\item   $\mathcal C=\mathcal V\bigl(\lambda
  x^2+\nu y^2-1\bigr)$, in which case
  \[
    \aut\bigl(\A^2,\mathcal C(\ack)\bigr)_p=\bigl\{\Id,
  t_{\lambda x_p^2-\nu y_p^2,2x_py_p}\tau\bigr\}= \bigl\{\Id,
  t_{2\lambda  x_p^2- 1,2x_py_p}\tau\bigr\},
\]
where  $t_{a,b}=\left(\begin{smallmatrix} a& -\nu b\\
      \lambda b& a\end{smallmatrix}\right)\in T_{\lambda,\nu}$ and $\tau= \left(\begin{smallmatrix} 1& 0\\
     0& -1\end{smallmatrix}\right)$.
  
\item $\mathcal C=\mathcal V\bigl(x^2+\mu xy+
  y^2-1\bigr)$, in which case
\[
    \aut\bigl(\A^2,\mathcal C(\ack)\bigr)_p=\bigl\{\Id,
  t_{\mu x_py_p+1,\mu y_p^2 }\sigma_\mu\bigr\}= \bigl\{\Id,
  t_{x_p^2+y_p^2,\mu y_p^2}\sigma_\mu\bigr\},
\]
where  $t_{a,b}=\left(\begin{smallmatrix} a&  b\\
       b& a+\mu b\end{smallmatrix}\right)\in T_{\mu}$ and $\sigma_\mu= \left(\begin{smallmatrix} 1& \mu\\
     0& -1\end{smallmatrix}\right)$.

\item $\mathcal C$ is the line of equation $x=0$, in which case
  \[
    \begin{split}
    \aut\bigl(\A^2,\mathcal
    C(\ack)\bigr)_p= &\  \Bigl\{
      \bigl(ax,by+P(x)\bigr) \mathrel{:} a,b\in\kk^*\,,\ P\in\kk[x],
      P(0)=-by_p\Bigr\}\cong\\
    & \ \operatorname{Ker}(R)\rtimes \bigl\{ (0,y)\mapsto
    (0, b y+ (1-b)y_p)\mathrel{:}\ b\in \kk^*\bigr\}.
\end{split}
  \]

\end{enumerate}
\end{lem}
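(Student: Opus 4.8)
The proof is a case-by-case computation built directly on the explicit descriptions of $\aut(\A^2,\mathcal C(\ack))$ recorded in Theorem~\ref{thm:blancprin}. In every case the recipe is the same: take a general element $g$ of the group, impose $g(p)=p$, and solve for the parameters while using that $p$ lies on $\mathcal C$.

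For the torus cases (1) and (2) the group is one--dimensional and acts diagonally with coprime weights. An element $(t^ax,t^by)$ (respectively $(t^ax,t^{-b}y)$) fixes $p=(x_p,y_p)$ precisely when $t^ax_p=x_p$ and $t^{\pm b}y_p=y_p$. In type~(2) the defining equation $x_p^by_p^a=\lambda\neq 0$ forces $x_p,y_p\neq 0$, so $t^a=t^b=1$; writing $1=ma+nb$ with $m,n\in\Z$ then gives $t=1$. In type~(1) the relation $x_p^b=\lambda y_p^a$ shows that $(0,0)$ is the only point of $\mathcal C$ with a vanishing coordinate, and away from this torus--fixed point the same coprimality argument yields $t=1$. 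Hence the isotropy is trivial in cases (1) and (2).

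The three cases (3), (4), (5)---that is $\mathcal V(xy-\lambda)$ and the two conics---all have the shape $T\rtimes\langle\iota\rangle$ with $T$ a rank--one torus and $\iota$ an involution ($\iota=\sigma,\tau,\sigma_\mu$). I would split the isotropy computation according to the two cosets. First, the torus part: for type~(3), $tx_p=x_p$ with $x_p\neq 0$ forces $t=1$; for the non--split tori of types~(4) and~(5) one writes out the two linear equations coming from $g(p)=p$, multiplies them, and substitutes the curve equation ($\lambda x_p^2+\nu y_p^2=1$, resp.\ $x_p^2+\mu x_py_p+y_p^2=1$) to conclude that the only element satisfying the norm relation ($a^2+\lambda\nu b^2=1$, resp.\ $a^2+\mu ab+b^2=1$) is the identity. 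Thus $T\cap\aut(\A^2,\mathcal C)_p=\{\Id\}$. For the nontrivial coset $T\iota$ I would simply exhibit the element displayed in (i)--(iii) and check three points: that its torus factor satisfies the norm relation (a short expansion using the square of the curve equation, the cross terms dropping out in characteristic two for type~(5)); that it fixes $p$ (again by substituting the curve equation); and that it squares to the identity. This last point is uniform and immediate once one records that $\iota$ acts on $T$ by inversion, i.e.\ $\iota g\iota^{-1}=g^{-1}$ for $g\in T$ (explicitly $\sigma g_t\sigma=g_{t^{-1}}$, $\tau t_{a,b}\tau^{-1}=t_{a,-b}$, and $\sigma_\mu t_{a,b}\sigma_\mu^{-1}=t_{a+\mu b,b}$, each equal to the corresponding inverse): then $(g\iota)^2=g\,(\iota g\iota^{-1})\,\iota^2=gg^{-1}=\Id$. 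Uniqueness of the point--stabilizing element of $T\iota$ is then automatic, for any two such elements differ by a torus element fixing $p$, hence are equal; and the coincidence of the two displayed forms of each involution follows by substituting the curve equation (for instance $\nu y_p^2=1-\lambda x_p^2$ in type~(4)).

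Case~(6), the line $x=0$, is treated separately and directly: substituting $x_p=0$ into a general element $(ax,by+P(x))$ leaves $a\in\kk^*$ free and imposes only the single scalar condition $by_p+P(0)=y_p$ on the $y$--component, which is precisely the asserted constraint on $P(0)$. Writing $P(x)=P(0)+\bigl(P(x)-P(0)\bigr)$ and peeling off the restriction map $R$ yields the stated decomposition $\operatorname{Ker}(R)\rtimes\{(0,y)\mapsto(0,by+(1-b)y_p)\}$. The main obstacle throughout is the explicit bookkeeping for the non--split tori in types~(4) and~(5), where the quadratic norm relation must be tracked carefully and, in characteristic two, one must verify that the relevant cross terms vanish; this is rendered painless by the structural observation that on such a curve each coset $T\iota$ contains exactly one point--stabilizing involution, so it suffices to produce a single candidate and invoke the uniqueness argument rather than solve the system from scratch.
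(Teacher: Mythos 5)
Your proposal is correct and, at the top level, follows the same route as the paper: a case-by-case computation reading off the explicit groups from Theorem~\ref{thm:blancprin}, imposing $g(p)=p$, and using the curve equation to pin down the parameters. The one genuine difference is in types (3)--(5): the paper finds the stabilizing element of the coset $T\iota$ by computing the eigenspace of eigenvalue $1$ of a general involution $t_{a,b}\iota$ and solving the resulting system against the compatibility conditions, whereas you exhibit the displayed candidate, verify directly that it lies in the group and fixes $p$, and then get uniqueness for free from $T_p=\{\Id\}$ (two stabilizing elements of $T\iota$ differ by an element of $T_p$). Your route avoids the eigenvector bookkeeping at the cost of a less ``discoverable'' derivation; both are sound, and your uniqueness observation is a clean way to organize the non-split-torus cases. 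Two small remarks. First, in type (1) you correctly note that $(0,0)\in\mathcal V(x^b-\lambda y^a)$ is fixed by the whole torus, so the isotropy is trivial only away from the origin; the lemma as stated (and the paper's ``it is clear'') silently overlooks this exceptional point, so you are if anything more careful than the source. Second, in case (6) your computation gives $P(0)=(1-b)y_p$, which is \emph{not} literally ``the asserted constraint'' $P(0)=-by_p$ in the statement; the latter is a typo (it is inconsistent with the semidirect-product description $(0,y)\mapsto(0,by+(1-b)y_p)$ that follows it), and your condition is the correct one, so no actual gap arises.
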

\begin{proof}
Let $G=\aut\bigl(\A^2,\mathcal C(\ack)\bigr)$.  If $\mathcal C$ is of
type (1) or (2), it is clear that $G_p=\{\Id\}$ for any $p\in \mathcal C(\ack)$.

\beginenumr
\item Easy calculations show that if $p=(x_p,y_p)$ is a $\ack$-point of $\mathcal V\bigl(xy -\lambda  \bigr)$, then $G_p=\bigl\{\Id,
\frac{x_p}{y_p}\sigma\bigr\}$.

\item Consider now  $\mathcal C=\mathcal V\bigl(\lambda
  x^2+\nu y^2-1\bigr)$ and let $p=(x_p,y_p)\in \mathcal C(\ack)$. Then
  $(T_{\lambda,\nu})_p=\{\Id\}$ and an involution $t_{a,b}\tau=\left(\begin{smallmatrix} a& \nu b\\
      \lambda b& -a\end{smallmatrix}\right)$ fixes $p$  if and only if
  there exists $s\in \kk^*$ such that
\begin{equation}
\label{eq:uno}
\left\{
\begin{array}{rcl}
-s\nu b &=& x_p\\
s(a-1)&=& y_p\\
\end{array}
\right.
\end{equation}

Indeed, $(-\nu b,a-1)$ is a basis of the space of  eigenvectors of eigenvalue
$1$ of $t_{a,b}\tau$.

In order to solve Equation \eqref{eq:uno}, we first recall that we
also have the compatibility conditions 
\begin{equation}
\label{eq:doso}
\left\{
\begin{array}{rcl}
a^2+\lambda\nu b^2&=& 1\\
\lambda x_p^2+\nu y_p^2&=& 1\\
\end{array}
\right.
\end{equation}

If $(a,b)\neq (1,0)$ then we can eliminate $s$ in Equation
\eqref{eq:uno}. If we substitute $x_p$ in Equation \eqref{eq:doso}, we
obtain that $(a,b)=(\lambda x_p^2-\nu y_p^2,2x_py_p) = (2\lambda  x_p^2-
1,2x_py_p)$. 
Notice that since $a\neq 1$, we deduce that $p\neq
\bigl(\pm\frac{1}{\sqrt{\lambda}},0\bigr)$. An easy calculations show
that $G_{_{( \pm\frac{1}{\sqrt{\lambda}},0)}}=\{\Id,\tau\}$.

\item If  $\mathcal C=\mathcal V\bigl(\lambda
  x^2+\nu y^2-1\bigr)$  and $p=(x_p,y_p)\in \mathcal
  C(\ack)$,  we proceed as in the previous case: we have  that 
  $(T_{\mu})_p=\{\Id\}$ and 
  $t_{a,b}\sigma_\mu=\left(\begin{smallmatrix} a& \mu a +b\\
       b& a\end{smallmatrix}\right)$ fixes $p$ if and only if
  there exists $s\in \kk^*$ such that
\[
\left\{
\begin{array}{rcl}
s(a-1) &=& x_p\\
sb&=& y_p\\
\end{array}
\right.
\]
with 
 compatibility conditions
\[
\left\{
\begin{array}{rcl}
a^2+\mu ab+ b^2&=& 1\\
 x_p^2+\mu x_py_p+  y_p^2&=& 1\\
\end{array}
\right.
\]

If $(a,b)\neq (1,0)$ then we can eliminate $s$ and obtain that
$(a,b)=(\mu x_py_p+1,\mu y_p^2) = (x_p^2+y_p^2,\mu y_p^2)$. 
Notice that $t_{1,\mu}\sigma_\mu$ has not $1$ as eigenvalue, and that since $a\neq 1$, we deduce that $p\neq
\bigl(1,0\bigr)$. An easy calculation show
that $G_{(1,0)}=\{\Id,\sigma_\mu\}$.

\item For the case  $\mathcal C=\mathcal V(x)$, just recall that
  $\aut\bigl(\A^2,\mathcal C(\ack)\bigr)$ is the semi-direct product of
  $\operatorname{Ker}(R)$ and $\aut(\A^1)$. 
\end{enumerate}
\end{proof}

\begin{rem}\label{rem:Gsimple}
 (1)   Let  $\mathcal C$ be  a curve of type (3) -- (5) and consider
  $G=\aut(\A^2,\mathcal C)$. Let  $p\in\mathcal C$ be such that
  $\overline{\orb_G(p)}=\mathcal C$ --- it is clear that such a $p$ exists. Then, it follows from Theorem
\ref{thm:blancprin} and   Lemma \ref{lem:isotropia} that
\[
G  =T\rtimes G_p= T\rtimes \{\Id,\tau_p\}=T\cup
  T\tau_p,
\]
where $T$ is a possibly non-split torus and  $\tau_p$ is the involution given in Lemma
\ref{lem:isotropia}.

Notice that $t\tau_p=t^{-1}\tau_p$ for all $t\in
T$.
In particular, any element of $T\tau_p$ is also an involution and
$\aut(\A^2,\mathcal C)=T\rtimes \{\Id,t\tau_p\}$ for all $t\in T$.

\noindent (2)
It follows that if $H\subset
\aut(\A^2,\mathcal C)$ is a subgroup, then either $H\subset T$ or
\[
  H= H_0\cup H_0\gamma = H_0\rtimes
  \{\Id, \gamma\},
\]
where $H_0=H\cap T$ and $\gamma\in H$ is a non trivial
involution. Indeed, if $\gamma, \gamma'\in H\cap (T\gamma)$, then
$\gamma'=h\gamma$, with $h\in H_0$. 

\noindent (3) In particular, if a subgroup $H\subset \aut(\A^2)$ 
and $p\in \A^2(\ack)$ are such that $\overline{\orb_H(p)}$  is a curve
of type (3) -- (5), it follows that either $H$ is a subgroup of the
corresponding torus $T\subset \aut\bigl(\A^2,\overline{\orb_H(0)}\bigr)$
  or
  \[
    H=H_0\rtimes \{\Id,t_0\tau_p\}=H_0\cup H_0t_0\tau_p,
    \]
    where $H_0=T\cap H_0$ and $t_0\in T$.

 Analogously, we have that $A=\aut\bigl(\A^2,\orb_H(p)\bigr)$ is either
 a subgroup of $T$ or
\[
   A =A_0\rtimes \{\Id,t_1\tau_p\}=A_0\cup A_0t_1\tau_p,
    \]
where $A_0=T\cap A_0$ and $t_1\in T$.

\end{rem}

  \begin{thm}
    \label{thm:autorbit}
  Let $H\subset \aut(\A^2)$ be a subgroup and let $p\in\A^2(\ack)$
  such that  $\mathcal C =\overline{ \orb_H(p)}$ is an
  irreducible curve. In the  notations  of Remark
  \ref{rem:Gsimple}, we have that:
  \beginenuma
\item  If $\mathcal C$ is a curve of type (1) or (2) then $A=H$.

  \item   If  $\mathcal C$ is a curve of type (3), (4) or
    (5),  then  $G_p=\{\Id,\tau_p\}\cong\Z_2$ and:
    \beginenumr
    \item If $H= H_0\subset T$ then $A= H_0\cup
      H_0\tau_p=H_0\rtimes G_p$.

      \item If $H=H_0\rtimes G_p=H_0\cup H_0\tau_p$, then
        $A=H$.

        \item If $H=H_0\rtimes
          \{\Id,t_0\tau_p\} = H_0\cup H_0t_0\tau_p$,  with $t_0\notin
          H_0$, then 
          $A=H$ unless $t_0^2\in H_0$, in which case
              \[
      A=\bigl\langle H_0\cup \{t_0\}\bigr\rangle \cup \bigr\langle H_0\cup
      \{t_0\}\bigr\rangle t_0\tau_p = \bigl\langle H_0\cup \{t_0\}\bigr\rangle 
      \rtimes \{\Id,t_0\tau_p\};
    \]

in particular, $A_0=\bigl\langle H_0\cup \{t_0\}\bigr\rangle$ and 
    $A_0/H_0\cong \Z_2$.
\end{enumerate}  

\item If $\mathcal C$ is a curve of type (6),
   then
  \[
  A=H (G_p\cap A).
\]
In particular, $H\operatorname{Ker(R)}\subset
A$.

\end{enumerate}
  \end{thm}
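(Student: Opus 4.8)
The plan is to isolate a single reduction that settles all three cases at once, and then to specialise it. Write $G=\aut\bigl(\A^2,\mathcal C(\ack)\bigr)$ and $A=\aut\bigl(\A^2,\orb_H(p)\bigr)$. Since any $\varphi\in A$ stabilises $\orb_H(p)$ it stabilises its closure $\mathcal C$, so by Remark \ref{rem:invariances} we have $H\subset A\subset G$. The key observation is that for $\varphi\in A$ one has $\varphi(p)\in\varphi\bigl(\orb_H(p)\bigr)=\orb_H(p)$, so $\varphi(p)=h(p)$ for some $h\in H$; then $g:=h^{-1}\varphi$ fixes $p$ and lies in $A$, whence $g\in G_p\cap A$ and $\varphi=hg\in H(G_p\cap A)$. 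As the reverse inclusion is immediate, this proves the \emph{master formula}
\[
A=H\,(G_p\cap A),
\]
which is exactly assertion (c). Everything else is the computation of $G_p\cap A$, for which Lemma \ref{lem:isotropia} supplies $G_p$ explicitly.

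For assertion (a) this is instantaneous: for curves of type (1) or (2), Lemma \ref{lem:isotropia} gives $G_p=\{\Id\}$, so the master formula collapses to $A=H$. For the remaining part of (c) I would add that $\operatorname{Ker}(R)$ acts trivially on $\mathcal C$: an element $\bigl(ax,y+P(x)\bigr)$ with $P(0)=0$ sends $(0,y)$ to $(0,y)$, hence fixes the line $\mathcal C$ pointwise and a fortiori fixes $\orb_H(p)\subset\mathcal C$. Therefore $\operatorname{Ker}(R)\subset A$, giving $H\operatorname{Ker}(R)\subset A$.

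For assertion (b) I would use that, for types (3)--(5), Lemma \ref{lem:isotropia} gives $G_p=\{\Id,\tau_p\}\cong\Z_2$ with $\tau_p$ inverting the torus $T$ (Remark \ref{rem:Gsimple}), and that $T$ acts freely on $\mathcal C$ because $G_p\cap T=\{\Id\}$. By the master formula $A$ equals $H$ or $H\cup H\tau_p$ according to whether $\tau_p\in A$, so in each sub-case the task is to write $\orb_H(p)$ as a union of $H_0$-cosets and test whether $\tau_p$ preserves it. In case (i), $\orb_H(p)=H_0p$ and $\tau_p(H_0p)=H_0^{-1}p=H_0p$, so $\tau_p\in A$ and $A=H_0\cup H_0\tau_p$. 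In case (ii), $\tau_p\in H\subset A$ and $H\tau_p=H$, so $A=H$. In case (iii), $\orb_H(p)=H_0p\sqcup H_0t_0p$ (disjoint by freeness, since $t_0\notin H_0$), and $\tau_p$ carries it to $H_0p\cup H_0t_0^{-1}p$; comparing cosets shows $\tau_p\in A$ exactly when $t_0^2\in H_0$. When $t_0^2\notin H_0$ this yields $A=H$; when $t_0^2\in H_0$ a short manipulation of cosets gives $A=\bigl\langle H_0\cup\{t_0\}\bigr\rangle\rtimes\{\Id,t_0\tau_p\}$.

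I expect the main obstacle to be the bookkeeping in case (b)(iii): one must track the three cosets $H_0p$, $H_0t_0p$, $H_0t_0^{-1}p$ carefully, use the commutativity of $T$ together with the inversion relation $\tau_p t\tau_p^{-1}=t^{-1}$ (valid for the possibly non-split torus), and check that $t_0^2\in H_0$ forces $\bigl\langle H_0\cup\{t_0\}\bigr\rangle=H_0\cup H_0t_0$ with $A_0/H_0\cong\Z_2$. The only other point requiring care is that the freeness of the $T$-action, which underlies both the disjointness of these cosets and the identification $G_p\cap T=\{\Id\}$, must be read off from Lemma \ref{lem:isotropia} rather than assumed.
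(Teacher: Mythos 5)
Your proposal is correct and follows essentially the same route as the paper: the ``master formula'' $A=H(G_p\cap A)$ is precisely the argument the paper uses in cases (a) and (c), and your case analysis in (b) performs the same coset bookkeeping (via Lemma \ref{lem:isotropia} and the inversion relation $\tau_p t\tau_p^{-1}=t^{-1}$ from Remark \ref{rem:Gsimple}), merely testing whether $\tau_p\in A$ where the paper computes $A_0$ and tests whether $t_0\in A_0$ --- the two tests yield the same condition $t_0^2\in H_0$. Stating the master formula once and specialising it is a slightly cleaner organization, but no new idea is involved.
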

  \begin{proof}
    In what follows, we
      keep the notations of 
Theorem \ref{thm:blancprin} and  assume (without loss of generality) that $\mathcal C$ is given by
    its canonical form.  It is
    clear that $H\subset A\subset G$  --- the last inclusion follows by
    continuity of the action; then 
    $
      \mathcal C=\overline{\orb_H(p)}=
      \overline{\orb_A(p)}=\overline{\orb_G(p)}$.

    \beginenuma
  \item If $\mathcal C=\overline{\orb_H(p)}$ is of type (1) or (2), it
    easily follows that  $G_p=\{\Id\}$. Thus, if  $\varphi\in A$ then
    $\varphi(p)\in\orb_H(p)$, and therefore there exists $h\in H$ such
    that $h\varphi\in G_p$. It follows that $A=H$.

\item  Let
  $G=T\rtimes \{Id,\tau_p\}$; since $H\subset  A\subset G$, it follows
  from Remark \ref{rem:Gsimple} that both $H$ 
  and $A$ are determined by  their intersections with $T$ and the
   involutions   $t_0\tau_p$, with $t_0\in T$,
  belonging to the group. Hence,  we may  distinguish three cases:

\beginenumr

\item  $H= H_0$. If $s\in A_0$ then $sp\in \orb_H(p)$ and therefore
  $sp=hp$ for some
  $h\in H$, so   $s=h\in H_0$. On the other hand $\tau_p(hp)=
  h^{-1}\tau_pp=h^{-1} p \in\orb_H(p)$ for any $h\in H$. It follows that $A=H\rtimes
  \{\Id, \tau_p\}$.

\item $H=H_0\cup H_0\tau_p=H_0\rtimes G_p$. If
        $s\in A_0$ then  $sp = hp$ for some $h\in H$. If $h\in H_0$ we
        deduce that $s=h\in H_0$. If $h=h_0\tau_p$,
        then $hp=h_0p$ and it follows that $s\in H_0$. Hence 
        $A=H$.

  \item $H=H_0\cup H_0t_0\tau_p= H_0\rtimes\{\Id,t_0\tau_p\}$, where
    $t_0\notin H_0$.

    If  $s\in A_0$, then  $sp = hp$ for some $h\in H$. If $h\in H_0$ we
        deduce that $s=h\in H_0$. If $h=h_0t_0\tau_p$,
        then $hp=h_0t_0p$ and it follows that $s=h_0t_0$.
 Hence, $s\in A_0$ if and only if
$t_0\in A_0$ (because $h_0\in H_0$), and  it remains to  determine
under which conditions we have that 
 $t_0\in A_0$.

Consider $h_1\in H_0$; then
$t_0\cdot(h_1p) =h_1t_0p=h_1t_0\tau_p(p)\in\orb_H(p)$  (since $t_0\tau_p\in
H$) and
$t_0\cdot\bigl(h_1t_0\tau_p(p)\bigr)=h_1t_0^2\tau_p(p)=h_1t_0^2p$. Notice
that 
$h_1t_0^2p\in\orb_H(p)$ if and only if  either 
$h_1t_0^2p\in \orb_{H_0}(p)$, or there exists $h_2\in H_0$ such that
$h_1t_0^2p=h_2t_0\tau_p(p)=h_2t_0p$. In the first case
it follows that  $t_0^2\in H_0$, and in the last case it follows that
$h_1t_0^2=h_2t_0$, and therefore $t_0\in H_0$.  Since
$t_0h_1t_0\tau_p(p)= h_1t_0^2p$, we conclude  that  $t_0\in A_0$ if
and only if $t_0^2\in H_0$.
      \end{enumerate}

\item
    If $\mathcal C$ is the line of equation $x=0$ and $\varphi=
    \bigl(ax,by+P(x)\bigr)\in  
  A$, then $\bigr(0,by_p+ P(0)\bigr)=h\cdot p$ for some $h\in H$. It
  follows that $h^{-1}\varphi\in G_p$ and therefore 
  $\varphi\in HG_p$; the result 
  follows. 
     \end{enumerate}
    \end{proof}

    If $\overline{\orb_H(p)}$ is an irreducible curve of type (6), the
    inclusion $H\operatorname{Ker}(R)\subset A$ may be strict, as the
    following example shows.
    
\begin{exa}\label{exa:larecta}
Assume that $\operatorname{char}(\K)=0$ and let
$\varphi=(x,y+c)$. If  $H={\langle
  \varphi\rangle}=\{\varphi^n\mathrel{:} n\in \Z\}$, then 
$\psi=(ax, ny)$ belongs to $\aut\bigl(\A^2,\orb_H(0,0)\bigr)$ for
all $n\in \Z$ -- indeed $\varphi^n(0,0)=(0, nc)$ for all $n\in \Z$.
\end{exa}

\begin{cor}\label{coro:notalg}
Let $H\subset \aut(\A^2)$ be an infinite countable subgroup of plane
automorphisms and $p\in\A^2(\ack)$ a point such that $\mathcal C
=\overline{ \orb_H(p)}$ is an   irreducible curve. Then the stabilizer
of $\orb_H(p)$ is not an algebraic group.
\end{cor}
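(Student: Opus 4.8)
The plan is to reduce to the cases enumerated in Theorem~\ref{thm:autorbit} and then to exhibit, in each of them, the mechanism that prevents $A:=\aut\bigl(\A^2,\orb_H(p)\bigr)$ from being algebraic. Since $H$ is infinite and $\mathcal C=\overline{\orb_H(p)}$ is an irreducible curve, Lemma~\ref{lem:closorb} forces $\mathcal C$ to be a single curve of type (1)--(6), so Theorem~\ref{thm:autorbit} describes $A$ completely. The proof then splits according to whether $\mathcal C$ is of type (6) or of type (1)--(5), because the reason for non-algebraicity is genuinely different in the two regimes.

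First I would dispose of the case where $\mathcal C$ is the line $x=0$, i.e.\ type (6). By Theorem~\ref{thm:autorbit}(c) we have $\operatorname{Ker}(R)\subseteq H\operatorname{Ker}(R)\subseteq A$, and $\operatorname{Ker}(R)$ contains the automorphism $(x,y+x^m)$ for every $m\geq 1$. Thus $A$ contains elements of arbitrarily large degree, and \cite[Theorem~1.3]{Ka} shows that $A$ is not algebraic, exactly as in the case of a finite set treated above. Note that here $A$ is in fact uncountable, so it is the unboundedness of the degree, and not countability, that does the work.

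For the remaining cases, where $\mathcal C$ is of type (1)--(5), the group $G=\aut(\A^2,\mathcal C)$ is one-dimensional: it is a torus $T$ (types (1),(2)) or $T\rtimes\Z_2$ (types (3)--(5)). I would first observe that $A$ is countable: by Theorem~\ref{thm:autorbit} it is obtained from the countable group $H$ by adjoining at most the involution $\tau_p$ and at most one torus element $t_0$, hence it is generated by a countable set; it is of course infinite, since $A\supseteq H$. The conclusion then follows from the principle underlying the whole paper, that an infinite algebraic group has uncountably many points, so that an infinite countable group such as $A$ cannot be algebraic. Equivalently, one argues geometrically: if $A$ were algebraic, then $A_0=A\cap T$, being an infinite Zariski-closed subgroup of the one-dimensional torus $T$, would equal all of $T$; but then the uncountable orbit $\orb_T(q)$ of any $q\in\orb_H(p)$ would be contained in the countable set $\orb_H(p)$, which is absurd.

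The main obstacle is precisely this last case. Whereas type (6) is handled by a soft degree bound, in types (1)--(5) every element of $A$ has degree one, so non-algebraicity cannot be detected from degrees; it must instead come from the failure of $A$ to be Zariski-closed inside the torus $G$, which is where the interplay between the countability of $\orb_H(p)$ and the cardinality of $\mathcal C(\ack)$ enters. The one point requiring care is to confirm that $A$, and hence $A_0$, is genuinely countable, for which one needs the explicit finite-over-$H$ description of $A$ supplied by Theorem~\ref{thm:autorbit}; granting that, the density argument closes the proof.
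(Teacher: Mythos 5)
Your proof is correct and follows essentially the same two-pronged argument as the paper: for type (6) you invoke $\operatorname{Ker}(R)\subset A$ and the unbounded degrees together with \cite[Theorem~1.3]{Ka}, and for types (1)--(5) you deduce from Theorem~\ref{thm:autorbit} that $A$ is infinite countable and hence not algebraic. The additional ``geometric'' reformulation via density in the torus is a harmless elaboration of the same countability principle the paper uses.
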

\begin{proof}
It follows from Theorem \ref{thm:autorbit} that if $\mathcal C$ is of
type (1)--(5) then $A$ is infinite countable and therefore  is not
algebraic. If $\mathcal C$ is of type (6) then  $A$  contains 
$\operatorname{Ker}(R)$ and therefore $A$ contains automorphisms of
arbitrary degree; it follows from \cite[Theorem 1.3]{Ka} that $A$
is not algebraic.
\end{proof}

  \section{The orbit of an automorphism of the plane}
\label{sec:orbitautom}

Let $\varphi\in
\aut(\A^2)$ and $p\in \A^2(\ack)$. 
In this section, as an application of
Theorem \ref{thm:autorbit}, we 
study the geometry  of the \emph{orbit of $p$ by $\varphi$}.  More precisely, we
consider $H=\langle \varphi\rangle$ and describe
$\mathcal C=\overline{\orb_H(p)}$, $\aut(\A^2,\mathcal C)$  and
$\aut\bigr(\A^2,\orb_H(p)\bigr)$.
Along this section, we keep the previous notations and  set $\orb_\varphi(p)=\orb_{H}(p)$.

 As Lemma \ref{lem:closorb} indicates, the closure of the orbit
 $\orb_\varphi(p)$ is either finite, a curve or the whole plane.
We begin by showing that under mild conditions, there exist pairs
$p\in\A^2(\ack)$ and $\varphi\in\aut(\A^2)$ such that
$\orb_\varphi(p)$ is infinite and $\mathcal C=\overline{\orb_\varphi(p)}$ is
of type (1)--(6) --- in particular,
  $\aut\bigr(\A^2,\orb_H(p)\bigr)$ is not algebraic (see Corollary \ref{coro:notalg}).

      \begin{exa}\label{exa3.1}
        \beginenuma

        \item Assume that $\kk$ is such that it contains an element
        $t_0\in\kk$ 
        that is not a root of unity.
        Then:

\beginenumr        
        \item If $p=(x_p,y_p)$, $x_py_p\neq 0$ and
        $\varphi(x,y)=(t_0^ax,t_0^by)$, with $a,b> 1$  coprimes,
        then $\mathcal C=\mathcal V(x^b - \frac{x_p^b}{y_p^a}y^a)$.

\item If $p=(x_p,y_p)$, $x_py_p\neq 0$ and
        $\varphi(x,y)=(t_0^ax,t_0^{-b}y)$, with $a,b\geq 1$  coprimes,
        then $\mathcal C=\mathcal V(x^by^a - x_p^by_p^a)$. Notice that
        if $ab\neq 1$ then $\mathcal C$ is of type (2), whereas
        if $a=b=1$, then $\mathcal C$ is of type (3).

        \item If $p=(x_p,y_p)\neq(0,0)$,  and
        $\varphi(x,y)=t_0\cdot(x,y)=(t_0x,t_0y)$, then $\mathcal C$ is a line.
\end{enumerate}

\item Assume now that  $\K$ is uncountable.

  \beginenumr

  \item If   $\car(\kk)\neq 2$  and there exist
    $\lambda,\nu\in\kk^*$  are such that   
    $-\lambda\nu$ is not a square in $\kk$, consider
       $T_{\lambda,\nu}=\bigl\{\left(\begin{smallmatrix} a& -\nu b\\
      \lambda b& a\end{smallmatrix}\right)
  \mathrel{:}
   (a,b)\in\kk\times\kk\,,\ 
   a^2+\lambda\nu b^2=1\bigr\}$. If $\varphi = \left(\begin{smallmatrix} a& -\nu b\\
      \lambda b& a\end{smallmatrix}\right)\in T_{\lambda,\nu}$, then
    $\varphi$  has eigenvalues $a\pm \sqrt{-\lambda\mu} b$. Since $\K$
    is uncountable,  there exists $(a,b)\in\K^2$ such that $a\pm
    \sqrt{-\lambda\mu} b$ is not a root of unity, and therefore
    $(0,0)$ is the unique periodic point of $\varphi$. It follows that
    if $p= (x_p,y_p)\in  \mathcal  C=  \mathcal V(\lambda x^2+\nu y^2-1)$, then $\orb_\varphi(p)$ is infinite, with
    $\overline{\orb_\varphi(p)}= \mathcal C$;
    in particular, $\overline{\orb_\varphi(p)}$ is of type (4).

    \item  Analogously, if  $\car(\kk)= 2$ and    $\mu\in\Bbbk^*$
                 is such that  $x^2+\mu
                 x+1$ has no root in $\kk$, one can prove that there
                 exists $\varphi\in  \left\{\left(\begin{smallmatrix} a&  b\\
                      b& a+\mu b\end{smallmatrix}\right)\mathrel{:}
a,b\in \kk\,, a^2+\mu ab + b^2=1\right\}$ such that $(0,0)$ is
the unique periodic point of $\varphi$, and therefore if $p\in \mathcal
C= \mathcal
V( x^2+\mu xy + y^2-1)$ then $\orb_\varphi(p)$ is infinite, with
    $\overline{\orb_\varphi(p)}= \mathcal C$;
    in particular, $\overline{\orb_\varphi(p)}$ is of type (5).
    \end{enumerate}
\end{enumerate}        
        \end{exa}

         If $\varphi\in\aut(\A^2)$, let $\mathcal S$ be
  the family of  $\varphi$-stable curves  and $\mathcal P$ the set of
  periodic points of $\varphi$. It is clear 
  that there exists
  $p\in\A^2(\ack)$ such that $\overline{\orb_\varphi(p)}=\A^2$
  if and only if $\mathcal S\cup\mathcal P\neq
  \A^2$. This simple remark allows us to show   that there exist
  automorphisms of the plane 
  with orbits which closure is all the plane as follows.

        \begin{exa}
   Let $\varphi\in\aut(\A^2)$; for $n\geq 1$, we denote by   $F_n$ the (closed)
        subset of fixed points of $\varphi^n$. Clearly, $\mathcal P=
        \bigcup F_n$.

           Recall that the 
 (first) \emph{dynamical degree} of $\varphi$
          is defined  as
        $\lambda_1(\varphi)=\lim_{m\to\infty} \sqrt[m]{\deg \varphi^m}$ --- notice that $\lambda_1(\varphi)>1$ implies  $\lambda_1(\varphi^n)>1$
for any $n\geq 1$; for more details see \cite{DF}, where the complex case is treated. It follows from  \cite[Thm. B]{Ca} and
\cite[Thm. 6.5]{Xi} that  if $\lambda_1(\varphi)>1$,  then $\mathcal
S$ is a curve and $F_n$ is finite. Therefore
$\mathcal S\cup \mathcal P$ is contained in a countable union of curves.
Hence, if $\ack$ is uncountable and $\lambda_1(\varphi)>1$, there
exists $p\in\A^2(\ack)$ such that $\overline{\orb_\varphi(p)}=\A^2$.

One particular example of this situation concerns the so-called H\'enon
maps, that is morphisms  of the form $\varphi= \bigl(y,-\delta
x+P(y)\bigr)$, where $\delta\in\K^*$ and $P\in\K[y]$ is a polynomial of
degree greater or equal that $2$, since in this case $\lambda_1(\varphi)=\deg P$.
\end{exa}

\emph{A priori}, the closure of an orbit $\orb_\varphi(p)$ is not
necessarily an irreducible curve. In what follows, we exploit Lemma \ref{lem:pro_gal} in order to describe
$\widehat{\orb_\varphi(p)}$. 
  If $\orb_\varphi(p)$ is
  finite then $\widehat{\orb_\varphi(p)}=
  \operatorname{Gal}(\ack/\kk)\cdot \orb_\varphi(p) $.
  On the other hand, if $\overline{\orb_\varphi(p)}=\A^2(\ack)$, then
  $\widehat{\orb_\varphi(p)}$ is the whole plane.
  Next proposition deal with the case when
  $\overline{\orb_\varphi(p)}$ is a curve.

  \begin{pro}\label{pro3.2}
    Let  $\varphi\in\aut(\A^2)$ and $p\in\A^2_{\ack}$  be such that
    $\overline{\orb_\varphi(p)}=\mathcal C$ is a curve, and let
    $\mathcal C=\bigcup_{i=1}^\ell\mathcal C_i$ and
    $\widehat{\orb_\varphi(p)} = \bigcup_{i=1}^s\mathcal C_i$ be the decomposition in
    irreducible components of $\mathcal C$ and
    $\widehat{\orb_\varphi(p)}$ respectively (see Lemma
    \ref{lem:pro_gal}). Then $s=k\ell$ and,  up to reordering the indexes, we
    have that:
    
 \beginenum
    \item The morphism $\varphi$ restricts to isomorphisms
      $\varphi|_{_{\mathcal C_{i+j\ell}}}:\mathcal C_{i+j\ell}\to \mathcal
      C_{i+j\ell+1}$ and
      $\varphi|_{_{\mathcal C_{j\ell +\ell}}}:\mathcal C_{j\ell+\ell}\to \mathcal
      C_{j\ell+1}$, for $i=1,\ldots,\ell-1$, $j=0,\dots, k-1$.

    \item  $\mathcal C_{i+j\ell}=\overline{\orb_{\varphi^{\ell}}\bigl(\varphi^{i+j\ell-1}(p)\bigr)}$
      for $i=1,\ldots, \ell$,  $j=0,\dots, k-1$.

      \item  The curves $\mathcal C_i$ are isomorphic --- in
        particular, the curves have the same  type.
      \end{enumerate}
    \end{pro}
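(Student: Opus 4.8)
The plan is to analyze the permutation that $\varphi$ induces on the irreducible components of $\overline{\orb_\varphi(p)}$ and of $\widehat{\orb_\varphi(p)}$, and to play this action off against the commuting Galois action supplied by Lemma \ref{lem:pro_gal}. First I would record that $\varphi$ stabilizes $\orb_\varphi(p)$, hence its Zariski closure $\mathcal C$, and — since $\varphi$ is defined over $\kk$ and therefore commutes with every $g\in\gal(\ack/\kk)$ — also $\widehat{\orb_\varphi(p)}=\gal(\ack/\kk)\cdot\mathcal C$. Consequently $\varphi$ permutes the components of each of these two curves, and these permutations commute with the Galois action.

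The core observation is that on the $\ell$ components $\mathcal C_1,\dots,\mathcal C_\ell$ of $\mathcal C$ the action of $\varphi$ is transitive: if $p\in\mathcal C_a$, then $\orb_\varphi(p)$ is contained in the union of the components in the $\varphi$-orbit of $\mathcal C_a$, so density of the orbit in $\mathcal C$ forces that $\varphi$-orbit to exhaust all $\ell$ components. Thus $\varphi$ acts as a single $\ell$-cycle, and after relabelling we may assume $p\in\mathcal C_1$ and $\varphi(\mathcal C_i)=\mathcal C_{i+1}$ (indices cyclically), which is assertion (1) for the block $j=0$; in particular $\varphi^\ell$ fixes each $\mathcal C_i$. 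Writing $\orb_\varphi(p)=\bigsqcup_{i=1}^\ell\orb_{\varphi^\ell}(\varphi^{i-1}(p))$ and noting that each $\overline{\orb_{\varphi^\ell}(\varphi^{i-1}(p))}$ is a closed subset of the single component $\mathcal C_i$, I would then argue that the union of these closures recovers $\mathcal C_i$ up to the finitely many points of $\mathcal C_i\cap\mathcal C_m$; since $\mathcal C_i$ is an irreducible curve, each such closure is either finite or all of $\mathcal C_i$, and the fact that $\orb_\varphi(p)$ is infinite (as $\mathcal C$ is a curve) together with the isomorphisms $\varphi\colon\mathcal C_i\to\mathcal C_{i+1}$ forces it to be all of $\mathcal C_i$. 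This gives (2) for $j=0$, and the restrictions of $\varphi$ give the mutual isomorphy of $\mathcal C_1,\dots,\mathcal C_\ell$.

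For the remaining components I would pass to the Galois picture. By Lemma \ref{lem:pro_gal} every component of $\widehat{\orb_\varphi(p)}$ has the form $g\cdot\mathcal C_i$ with $g\in\gal(\ack/\kk)$ and $i\le\ell$, and since $g$ commutes with $\varphi$, the $\varphi$-orbit of $g\cdot\mathcal C_1$ is exactly $g\cdot\{\mathcal C_1,\dots,\mathcal C_\ell\}$, again an $\ell$-cycle. Hence the components of $\widehat{\orb_\varphi(p)}$ split into $k$ disjoint $\varphi$-cycles of length $\ell$ (one of which is the block $j=0$), giving $s=k\ell$ and assertion (1) in full once each block is labelled $\mathcal C_{j\ell+1},\dots,\mathcal C_{j\ell+\ell}$. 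Transporting the case $j=0$ by a Galois element $g_j$ carrying block $0$ to block $j$ yields $\mathcal C_{i+j\ell}=g_j\cdot\mathcal C_i=g_j\cdot\overline{\orb_{\varphi^\ell}(\varphi^{i-1}(p))}=\overline{\orb_{\varphi^\ell}(\varphi^{i-1}(g_j\cdot p))}$, using $g_j\varphi=\varphi g_j$; since $\varphi^{i+j\ell-1}(p)=\varphi^{j\ell}(\varphi^{i-1}(p))$ and $\varphi^{j\ell}$ fixes each block, this matches assertion (2), and the fact that each $g_j$ and each power of $\varphi$ restricts to an isomorphism between components gives assertion (3). The main obstacle is the dimension/density step in the second paragraph — ruling out that some $\varphi^\ell$-orbit closure is a proper (finite) subset of its component — together with keeping the bookkeeping of the two commuting actions consistent, so that the block indexing in (1) and the orbit-point indexing in (2) agree.
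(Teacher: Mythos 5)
Your overall route is the same as the paper's: use density of $\orb_\varphi(p)$ to show that $\varphi$ permutes the components $\mathcal C_1,\dots,\mathcal C_\ell$ of $\mathcal C$ in a single $\ell$-cycle, obtain the remaining components of $\widehat{\orb_\varphi(p)}$ as Galois translates via Lemma \ref{lem:pro_gal}, and transport everything using the fact that $\varphi$, being defined over $\kk$, commutes with the action of $\gal(\ack/\kk)$. You are in fact more explicit than the paper in two places it leaves implicit: the argument that each $\overline{\orb_{\varphi^\ell}(\varphi^{i-1}(p))}$ is all of $\mathcal C_i$ rather than a finite set (the paper dismisses assertion (2) as ``clear''), and the derivation of $s=k\ell$ from the partition of the components of $\widehat{\orb_\varphi(p)}$ into disjoint $\varphi$-cycles of length exactly $\ell$ (the paper instead asserts that the Galois orbits $\gal(\ack/\kk)\cdot\mathcal C_i$, $i\le\ell$, pairwise share no curve, which can fail when a Galois element already permutes the components of $\mathcal C$; your cycle count sidesteps this).

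The one step that does not close is the final identification in your treatment of assertion (2) for $j\ge 1$. What you correctly derive is $\mathcal C_{i+j\ell}=\overline{\orb_{\varphi^\ell}\bigl(\varphi^{i-1}(g_j\cdot p)\bigr)}$, which involves the conjugate point $g_j\cdot p$. This does not ``match'' the printed formula: since $\varphi^{i+j\ell-1}(p)=(\varphi^\ell)^j\bigl(\varphi^{i-1}(p)\bigr)$ and $(\varphi^\ell)^j\in\langle\varphi^\ell\rangle$, one has $\orb_{\varphi^\ell}\bigl(\varphi^{i+j\ell-1}(p)\bigr)=\orb_{\varphi^\ell}\bigl(\varphi^{i-1}(p)\bigr)$, whose closure is $\mathcal C_i$ and not $\mathcal C_{i+j\ell}$ when $k>1$; indeed $\varphi^{i+j\ell-1}(p)$ lies on $\mathcal C\subset\bigcup_{i\le\ell}\mathcal C_i$ and never on a component with index above $\ell$. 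So your argument establishes (2) only for $j=0$, and the version with the Galois-translated base point is the one that is actually true for $j\ge1$. To be fair, this defect originates in the statement itself (whose proof in the paper offers nothing for (2)), but the sentence ``this matches assertion (2)'' is precisely the bookkeeping of the two commuting actions that you identified as the main obstacle, and it is where the write-up slips: you should either restrict (2) to $j=0$ or restate it with $g_j\cdot p$ in place of $p$.
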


    \begin{proof}
     Without loss of generality we
      assume that $p\in \mathcal C_1$. We reorder $\mathcal
      C_1,\dots, \mathcal C_\ell$ in such a way that $\mathcal C_i=
      \varphi^{i-1}(\mathcal C_1)$ for $i=1,\dots, \ell$ (see
      Lemma \ref{lem:closorb}); then   $\mathcal C_1=\varphi(\mathcal
      C_\ell)$.    It follows from Galois descent that 
      \[
        \widehat{\orb_\varphi(p)}=\operatorname{Gal}(\ack/\kk)\cdot 
     \mathcal C= \bigcup_{i=1}^\ell  \operatorname{Gal}(\ack/\kk)\cdot
     \mathcal C_i,
\]
where if $i\neq j$, then $ \operatorname{Gal}(\ack/\kk)\cdot
     \mathcal C_i\cap  \operatorname{Gal}(\ack/\kk)\cdot
     \mathcal C_j$ does not contain a curve. Since  $\varphi$ is fixed
     by the 
      Galois action, it follows from Galois descent that  if $i=1,\dots 
      ,\ell-1$, then 
      $\varphi|_{\mathcal C_i}: \mathcal C_i\to \mathcal C_{i+1}$
      induces an $\kk$-isomorphism  $\operatorname{Gal}(\ack/\kk)\cdot
     \mathcal C_i\to \operatorname{Gal}(\ack/\kk)\cdot
     \mathcal C_{i+1}$. Assertion (1) follows; assertion (2) is clear.

In order to prove assertion (3), we first observe that $\mathcal
C_{j\ell+1},\dots, \mathcal C_{j\ell+\ell}$ being isomorphic for
$j=0,\dots k-1$, they have the same type. Up to reordering, we can
assume that there exists   $g_j\in\operatorname{Gal}(\ack/\kk)$, $j=1,\dots, k-1$, such that     $\mathcal C_{i+j\ell} =g_j\cdot \mathcal C_{i}$.
 Since    $\psi\in
\aut(\A^2,g_j\mathcal C_1) $ if only if $\psi = g^{-1}\psi g\in
\aut(\A^2,\mathcal C_1)$, the result follows.
    \end{proof}

      \begin{exa}\label{masejem}
       Suppose that $\Delta\subset\A^2(\ack)$ is such that $\widehat{\Delta}$ is a  fence of
       equation  $F(x)=0$, with $F\in\kk[x]$ --- then it follows from
       \cite[Theorem 1]{BlSt}  that       
        \[
\aut\bigl(\A^2,\widehat{\Delta}\bigr)=\bigl\{\bigl(\alpha x+\beta,\gamma
y+P(x)\bigr)\mathrel{:}
\alpha,\beta,\gamma\in\kk\,,\  F(\alpha x+\beta)/F(x)\in\kk^*\,, \ P\in\kk[x]\bigr\}. 
\]
     
\beginenuma
   \item  If $F=x-1$, then $\varphi=\bigl(\alpha x+\beta,\gamma
y+P(x)\bigr)\in \aut\bigl(\A^2,\widehat{\Delta}) $  if and only if $\alpha=1$ and
$\beta=0$. Choose $\gamma\in\kk^*$ which is not a root
     of the unity and consider   $p=(1,1)\in\A^2(\ack)$.  Let
     $P\in\kk[x]$ be such that $P(1)=1$. Then $\varphi^n(p)\neq p$
     for all $n$ and  $\ophi(p)=\bigl\{(1,\sum_{i=0}^n \gamma^i);
     n\in\Z\bigr\}$. It follows that
     $\overline{\ophi(p)}=\widehat{\ophi(p)}_{\ack}=\widehat{\Delta}$.  Thus, in this
     case $\widehat{\ophi(p)}_{\ack}$ is an irreducible curve, stable
     by $\varphi$ --- in the notations of Proposition \ref{pro3.2},
     $\ell=s=1$.  Recall  that in this case the group
     $\aut\bigl(\A^2,\ophi(p)\bigr)$       is not
     algebraic.

  \item  If $F=x^2-1$ , then $\varphi=\bigl(\alpha x+\beta,\gamma
y+P(x)\bigr)\in \aut\bigl(\A^2,\widehat{\Delta}\bigr) $  if and only if
$\alpha=\pm 1$ and 
$\beta=0$. Choose $\gamma\in\kk^*$ which is not a root
     of the unity,  $d\geq 1$,  and consider the automorphism
     $\varphi_d=\bigl(-x,\gamma y+(x^2-1)^d\bigr)$. If $p=(1,y_0)$, with $y_0\neq 0$, then
     $\orb_{\varphi_d}(p)=\bigl\{\bigl((-1)^n,\gamma^n y_0\bigr); n\in\Z\bigr\}$ is dense
     in the fence $F(x)=0$. It follows that
     $\widehat{\ophi(p)}_{\ack}=\overline{\ophi(p)}=
     \widehat{\Delta}$, and therefore it is a curve with two
     irreducible components. Moreover, in the notations of Proposition
     \ref{pro3.2}, $\ell=s=2$ and once again
     $\aut\bigl(\A^2,\ophi(p)\bigr)$ is not algebraic because the orbit does not
     depend on $d=\deg \varphi_d$. 
\end{enumerate}
 \end{exa}

 The following example, very similar to the ones given above shows
 that is not sufficient for $\Delta$ to be  an orbit in order to
 guarantee  $\overline{\Delta}$ and $\widehat{\Delta}$ to be equals.

 \begin{exa}\label{masejem2}
 Assume that $\K\neq \ack$ and  let
 $\xi\in\ack\setminus\K$. Denote  by $P\in\K[x]$ the minimal polynomial
 of $\xi$ over $\K$ and consider   $\varphi=\bigl(x,\gamma
 y+P(x)\bigr)\in\aut(\A^2)$, where $\gamma\in\kk^*$ is not a root of
 unity. If $p=(\xi,1)$ , then $\Delta=\orb_\varphi(p)=\bigl\{(\xi,\gamma^n); n\in\Z\bigr\}$
 is infinite. Clearly $\overline{\Delta}=\mathcal V_{\ack}(x-\xi)$ and
 $\widehat{\Delta}=\mathcal V_{\ack}(P)$ so
 $\Delta\subsetneq  \overline{\Delta}\subsetneq \widehat{\Delta}$. 

Notice that the automorphisms $(x,y)\mapsto \bigl(x,y+P(x)^d\bigr)$,  $d\geq 1$, 
 fix every point in $\Delta$ and therefore 
 they belong to $\aut(\A^2,\Delta)$.
\end{exa}

\begin{rem}
  \label{remnoirred}
Let        $\varphi\in\aut(\A^2)$ and   $p=(x_p,y_p)\in \A^2\bigl(\ack\bigr)$ be such  that 
         $\overline{\ophi(p)}=\mathcal C_1\cup\dots \cup \mathcal C_\ell$
         is a curve. Then, by Proposition \ref{pro3.2}, up to
         reordering we can assume that 
         $\mathcal C_i=\varphi^{i-1} \mathcal C_1$, for $i=0,\dots, \ell
         -1$, with $\varphi^\ell(\mathcal C_1)=\mathcal
         C_1$. Therefore
\[
\mathcal C_1=\overline{\orb_{\varphi^\ell}(p)}.
\]
         
Moreover,
         each curve $\mathcal C_i$  is of the same type and, up to
         conjugation by an automorphism of $\A^2$, we can
          assume that $\mathcal C_1$ is one of the list given in  
         Theorem \ref{thm:blancprin}.

         Since any automorphism   $\psi\in
         \aut\bigl(\A^2,\overline{\ophi(p)}\bigr)$ induces a
         permutation of the irreducible curves $\mathcal C_i$, we have
         the following exact sequence of abstract groups:
         \[
           \xymatrix{
             1\ar[r]& \bigcap_{i=1}^\ell\aut(\A^2,\mathcal C_i)\ar[r] &
             \aut\bigl(\A^2,\overline{\ophi(p)}\bigr)\ar[r]&
             \mathfrak{S}_{\ell}
                          }
                        \]
 where $\mathfrak{S}_\ell$ is group of permutations of $\ell$
 elements. Since $\varphi^{n\ell}\in
 \aut\bigl(\A^2,\overline{\orb_{\varphi^\ell}(\varphi^i(p))}\bigr)$, if follows that
 $\bigcap\aut(\A^2,\mathcal C_i)$ is an infinite subgroup of
 $\aut(\A^2,\mathcal C_1)$.

Moreover,  $\aut(\A^2,\mathcal C_i)=\varphi^{i-1}\aut(\A^2,\mathcal
C_1)\varphi^{1-i}$ and if $\psi\in
\aut\bigl(\A^2,\overline{\ophi(p)}\bigr)$, then 
 there exists $i\in\{0,\dots \ell-1\}$ such that $\varphi^i\psi\in
 \aut(\A^2,\mathcal C_1)$. It follows that  $\psi\in
 \varphi^{-i}\aut(\A^2,\mathcal C_1)$.
  \end{rem}

  \begin{nota}
If $G$ is a group and $A,B\subset G$ arbitrary subsets, we denote
$AB=\{ab\mathrel{:} a\in A\,,\ b\in B\}$. 
    \end{nota}

      \begin{thm}\label{thm_2} Let
        $\varphi\in\aut(\A^2)$ and   $p=(x_p,y_p)\in \A^2\bigl(\ack\bigr)$ be such  that 
         $\overline{\ophi(p)}=\mathcal C_1\cup\dots \cup \mathcal C_\ell$
         is a curve. Then  up to applying an automorphism of $\A^2$
         one of the following assertions holds:   

\beginenuma

\item  $\mathcal C_1$ is of type (1) or (2) and $p\neq(0,0)$. In this case,
  $\aut\bigl(\A^2,\ophi(p)\bigr)=  \langle \varphi\rangle$.

\item  $\mathcal C_1$ is of type (3), (4) or (5). If we denote 
  $\aut(\A^2,\mathcal C_1)_p=\{\Id,\tau_p\}$ (see Lemma
  \ref{lem:isotropia}),  we have
  two possibilities:

  \beginenumr
\item If $\tau_p\varphi=\varphi^i\tau_p$ for some $i\in \Z$, then
  \[
    \aut\bigl(\A^2,\ophi(p)\bigr)=\langle \varphi\rangle\rtimes
    \{\Id,\tau_p\}=\langle\varphi\rangle\cup
    \langle\varphi\rangle\tau_p
  \]

  \item In other case, $    \aut\bigl(\A^2,\ophi(p)\bigr)=\langle
    \varphi\rangle$. 

\end{enumerate}

\item $\mathcal C_1$ is of type (6). In this case 
  \[
    \begin{split}
      \aut\bigl(\A^2,\orb_\varphi(p)\bigr)
      = & \    \bigcap_{i=0}^{\ell-1} \langle\varphi\rangle \Bigl(\varphi^i\Bigl(G_p\cap \aut\bigl(\A^2,\orb_{\varphi^\ell}(p)\bigr)\Bigr)\varphi^{-i}\Bigr)\\
     = &\  \bigcap_{i=0}^{\ell-1} \langle\varphi\rangle \Bigl(\Bigl(G_p\cap \aut\bigl(\A^2,\orb_{\varphi^\ell}(p)\bigr)\Bigr)\varphi^{-i}\Bigr)\\
    =  & \ \bigcap_{i=0}^{\ell-1} \langle\varphi\rangle \Bigl(\aut\bigl(\A^2,\orb_{\varphi^\ell}(p)\bigr)\varphi^{-i}\Bigr)  , 
    \end{split}
  \]
where $G=\aut\bigl(\A^2,\mathcal C_1(\ack)\bigr)$.

Moreover, if $\kk$ is uncountable, there exist infinitely many
automorphisms $\varphi\in G$ such that
\[
  \aut\bigl(\A^2,\orb_\varphi(p)\bigr)= \bigcap_{i=0}^{\ell-1}
  \langle\varphi\rangle \bigl(\varphi^i \operatorname{Ker}(R)\varphi^{-i}\bigr)= \bigcap_{i=0}^{\ell-1}
  \langle\varphi\rangle \bigl(\operatorname{Ker}(R)\varphi^{-i}\bigr),
  \]
where $R:\aut\bigr(\A^2,\mathcal C_1(\ack)\bigr)\to
  \aut\bigl(\mathcal C_1(\ack)\bigr)$ is the restriction morphism (see \ref{thm:blancprin}).
\end{enumerate}
\end{thm}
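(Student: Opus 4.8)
My plan is to reduce the reducible case to the irreducible one treated in Theorem \ref{thm:autorbit}, using the cyclic action of $\varphi$ on the components provided by Proposition \ref{pro3.2}. Throughout I write $A=\aut\bigl(\A^2,\orb_\varphi(p)\bigr)$ and $A^{(\ell)}=\aut\bigl(\A^2,\orb_{\varphi^\ell}(p)\bigr)$. Since $\overline{\orb_\varphi(p)}$ is a curve the orbit is infinite, so $H=\langle\varphi\rangle\cong\Z$; by Proposition \ref{pro3.2} and Remark \ref{remnoirred} I may assume $p\in\mathcal C_1$, that $\mathcal C_{i+1}=\varphi^i(\mathcal C_1)$, that $\mathcal C_1=\overline{\orb_{\varphi^\ell}(p)}$ is irreducible and in canonical form, and that $\mathcal C_1$ is one of the curves (1)--(6). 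A useful first observation will be that the finite set $Z$ of pairwise intersection points of the components is $\varphi$-stable (as $\varphi$ permutes the components), so the infinite orbit $\orb_\varphi(p)$ is disjoint from $Z$; hence every orbit point lies on a single component and the slices $S_i:=\orb_\varphi(p)\cap\mathcal C_{i+1}=\varphi^i\bigl(\orb_{\varphi^\ell}(p)\bigr)$ are pairwise disjoint.

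The engine of the proof will be the reduction lemma
\[
A=\bigcap_{i=0}^{\ell-1}\langle\varphi\rangle\,\varphi^{i}A^{(\ell)}\varphi^{-i}=\bigcap_{i=0}^{\ell-1}\langle\varphi\rangle\,A^{(\ell)}\varphi^{-i},
\]
valid for every type (the second equality is immediate from $\langle\varphi\rangle\varphi^{i}=\langle\varphi\rangle$). To prove it I would transport everything to $\mathcal C_1$. For $\subseteq$: any $\psi\in A$ preserves $\overline{\orb_\varphi(p)}$ and hence permutes the components, say $\psi(\mathcal C_{i+1})=\mathcal C_{\sigma(i)+1}$; since $A$ is a group, $\psi^{-1}\in A$ as well, so $\psi$ carries $S_i$ bijectively onto $S_{\sigma(i)}$, and therefore $\varphi^{-\sigma(i)}\psi\varphi^{i}$ stabilizes $\orb_{\varphi^\ell}(p)$, i.e. lies in $A^{(\ell)}$; this places $\psi$ in $\langle\varphi\rangle A^{(\ell)}\varphi^{-i}$ for each $i$. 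For $\supseteq$: writing $\psi=\varphi^{j_i}a_i\varphi^{-i}$ with $a_i\in A^{(\ell)}$ gives $\psi(S_i)=\varphi^{j_i}\orb_{\varphi^\ell}(p)=S_{j_i\bmod\ell}$, and because $\psi$ permutes the components the residues $j_i\bmod\ell$ run over $\{0,\dots,\ell-1\}$, so $\psi$ permutes the $S_i$ and stabilizes their union $\orb_\varphi(p)$.

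It then remains to insert the description of $A^{(\ell)}$ coming from Theorem \ref{thm:autorbit} applied to $\langle\varphi^\ell\rangle$ (whose orbit-closure $\mathcal C_1$ is irreducible) and to do the bookkeeping. For types (1)--(2), part (a) of that theorem gives $A^{(\ell)}=\langle\varphi^\ell\rangle$, and since $\langle\varphi\rangle\langle\varphi^\ell\rangle\varphi^{-i}=\langle\varphi\rangle$ the lemma collapses to $A=\langle\varphi\rangle$. For types (3)--(5), the element $\varphi^\ell$ has infinite order and so cannot lie in the coset $T\tau_p$ of involutions (Remark \ref{rem:Gsimple}); thus $\varphi^\ell\in T$ and part (b)(i) gives $A^{(\ell)}=\langle\varphi^\ell\rangle\rtimes\{\Id,\tau_p\}$, whence each factor of the lemma equals $\langle\varphi\rangle\cup\langle\varphi\rangle\tau_p\varphi^{-i}$. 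The left cosets $\langle\varphi\rangle\tau_p\varphi^{-i}$ all coincide with $\langle\varphi\rangle\tau_p$ exactly when $\tau_p\varphi\tau_p^{-1}\in\langle\varphi\rangle$, i.e. $\tau_p\varphi=\varphi^{i}\tau_p$ for some $i$, giving $A=\langle\varphi\rangle\rtimes\{\Id,\tau_p\}$ (case (i)); otherwise two of these cosets are disjoint, the $\tau_p$-part of the intersection is empty, and $A=\langle\varphi\rangle$ (case (ii)). For type (6), part (c) gives $A^{(\ell)}=\langle\varphi^\ell\rangle\bigl(G_p\cap A^{(\ell)}\bigr)$; substituting and absorbing powers of $\varphi^\ell$ into $\langle\varphi\rangle$ (using $\varphi^i\langle\varphi^\ell\rangle\varphi^{-i}=\langle\varphi^\ell\rangle$) yields the three displayed expressions, which agree because $\langle\varphi\rangle\varphi^{i}X\varphi^{-i}=\langle\varphi\rangle X\varphi^{-i}$ and $\langle\varphi\rangle\bigl(G_p\cap A^{(\ell)}\bigr)=\langle\varphi\rangle A^{(\ell)}$. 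For the uncountable case I would note that $\operatorname{Ker}(R)\subseteq G_p\cap A^{(\ell)}$ always, with equality as soon as $R(\varphi^\ell)$ is chosen so general that the identity is the only element of $\aut(\A^1)$ fixing $y_p$ and stabilizing $\{R(\varphi^\ell)^n(y_p)\}$; only countably many parameters are excluded, so infinitely many $\varphi$ qualify.

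The hard part will be the reduction lemma, and within it the claim that $\psi\in A$ carries each slice $S_i$ bijectively onto a full slice: this is what forces the conjugates $\varphi^{-\sigma(i)}\psi\varphi^{i}$ to land exactly in $A^{(\ell)}$ rather than merely stabilizing a cofinite subset. The disjointness of the slices (from the $\varphi$-stability of the finite set $Z$) and the fact that $A$ is a group (so that $\psi^{-1}\in A$) are precisely what make this step clean; once it is in place, everything reduces to coset arithmetic steered by Theorem \ref{thm:autorbit}.
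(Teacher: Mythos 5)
Your proof is correct, and the underlying mechanism is the same as the paper's: conjugate by powers of $\varphi$ to transport everything to $\mathcal C_1$, apply Theorem \ref{thm:autorbit} to $H=\langle\varphi^\ell\rangle$ (whose orbit closure is the irreducible curve $\mathcal C_1$), and finish with coset bookkeeping. The organizational difference is that you prove the intersection formula $\aut\bigl(\A^2,\orb_\varphi(p)\bigr)=\bigcap_{i=0}^{\ell-1}\langle\varphi\rangle\,\aut\bigl(\A^2,\orb_{\varphi^\ell}(p)\bigr)\varphi^{-i}$ uniformly for all types, whereas the paper establishes it only in case (c) and treats (a) and (b) by the one-factor decomposition $\psi=\varphi^j t$ of Remark \ref{remnoirred}. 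Your packaging buys a genuinely cleaner case (b): since the factors become $\langle\varphi\rangle\cup\langle\varphi\rangle\tau_p\varphi^{-i}$ and right cosets of $\langle\varphi\rangle$ are equal or disjoint, both the necessity and the sufficiency of the condition $\tau_p\varphi=\varphi^i\tau_p$ drop out at once (with the $\ell=1$ case automatic because $\tau_p t\tau_p=t^{-1}$ on $T$), where the paper's argument for this equivalence is terser. Your key supporting observations --- that the finite set of pairwise intersections of components is $\varphi$-stable, so the slices $S_i=\varphi^i\bigl(\orb_{\varphi^\ell}(p)\bigr)$ are disjoint, and that injectivity of $\psi$ forces the induced map on slices to be a permutation in the $\supseteq$ direction --- are exactly what is needed and are correctly identified. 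The one place your sketch is thinner than the paper is the final \emph{Moreover} clause: your criterion (that the identity be the only element of $\aut(\A^1)$ fixing $y_p$ and stabilizing the orbit of $y_p$ under $R(\varphi^\ell)$, which indeed yields $G_p\cap\aut\bigl(\A^2,\orb_{\varphi^\ell}(p)\bigr)=\operatorname{Ker}(R)$) is the right one, but ``only countably many parameters are excluded'' still requires checking that each of the resulting polynomial conditions on the multiplier $b$ is non-trivial, i.e.\ has only finitely many solutions; this is precisely the computation the paper carries out explicitly and should be included.
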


      \begin{proof}
        By Proposition \ref{pro3.2}, we have that 
        $\{\varphi^n\}_{n\in\Z}$ is infinite and $\mathcal C_1$ is of type (1) -- (6).
        Hence, it remains  to 
        describe   $\aut\bigl(\A^2,\ophi(p)\bigr)$ in each case.  
We assume that  $\mathcal C_i=\varphi^{i-1}(\mathcal C_1)$, and
$\varphi^\ell\in  \aut(\A^2,\mathcal C_1)$ (see Remark \ref{remnoirred}).

\beginenuma
\item If $\mathcal C_1= \mathcal V(x^b-y^a)$ 
  then,  since  $\varphi^\ell\in \aut(\A^2,\mathcal C_1)$, we have
  that $\varphi^\ell(0,0)=(0,0)$ and therefore $p\neq (0,0)$.
  If $\psi\in
  \aut\bigl(\A^2,\ophi(p)\bigr)$, then by Remark 
  \ref{remnoirred} $\psi=\varphi^j t$ for some $t\in \aut(\A^2,\mathcal
  C_1)$ and some $j\in \Z$.  But if $n\in \Z$, then there exists $s_n$
  such that 
  $\varphi^jt\varphi^{n\ell}(p)=\varphi^{s_n}(p)$. If follows that
  $ \varphi^{s_n-j}(p)=t\varphi^{n\ell}(p)\in \mathcal C_1$, and
  therefore $s_n-j$ is a multiple of $\ell$ and $t\in
  \aut(\A^2,\orb_{\varphi^\ell}(p))$. We deduce from   Theorem
  \ref{thm:autorbit} that $t\in \langle \varphi^\ell\rangle$, and
  therefore
  $
  \aut\bigl(\A^2,\ophi(p)\bigr)=\langle \varphi \rangle$.

If $\mathcal C_1$ is of type (2) an analogous argument applies.

\item Since 
  $\overline{\orb_{\varphi^\ell}(p)}=\mathcal C_1$ and
  $\varphi^\ell\in  \aut(\A^2,\mathcal C_1)$ then, in the
  notations of Theorem \ref{thm:autorbit}, there exists
  $t_0\in \K^*\subset \K^*\rtimes \{\Id,\tau_p\}=\aut(\A^2,\mathcal
  C_1)$ such that    $\varphi^\ell=t_0$. As in the previous case, if $\psi\in
  \aut\bigl(\A^2,\ophi(p)\bigr)$ we deduce that there exist $i\in \Z$
  and  $h_0\in  
  \aut\bigl(\A^2,\orb_{\varphi^\ell}(p)\bigr)$ such that
  $\psi=\varphi^ih_0$.  Applying Theorem \ref{thm:autorbit} to
  $H=\langle\varphi^\ell\rangle$, we deduce that $h_0\in \langle
  \varphi^\ell\rangle\rtimes\{\Id,\tau_p\}$.

If there exists $\psi\in \aut\bigl(\A^2,\ophi(p)\bigr)\setminus
\langle\varphi\rangle$, then 
$h_0\notin \langle\varphi^{\ell}\rangle$ and there exists
$n\in \Z$ such that 
$h_0=\varphi^{n\ell}\tau_p\in \aut\bigl(\A^2,\ophi(p)\bigr)$ ---
because $\varphi\in \aut\bigl(\A^2,\ophi(p)\bigr)$ ---, and it follows that
$\tau_p\in \aut\bigl(\A^2,\ophi(p)\bigr)$.

If $j\in \Z$, then $\tau_p\bigl(\varphi^j(p)\bigr)=\varphi^{i_j}(p)$
if and only if $\varphi^{i_j}\tau_p\varphi^j\in \aut(\A^2,\mathcal
C_1)_p$. Taking $i=-1$ we deduce that there exists $i\in \Z$ such that
 $\tau_p\varphi=\varphi^{i}\tau_p$ for some
$i\in \Z$ and the assertion is proved.

\item   
Assume now that $\mathcal C_1=\mathcal
    V(x)$. Since $\mathcal C_1\cap \mathcal C_i$ is finite and
    $\varphi^\ell$-stable for all $i=2,\dots,\ell$, it follows that
    $p\notin \mathcal C_1\cap \mathcal C_i$, because
    $\orb_{\varphi^\ell}(p)$ is infinite  --- more in general, $\mathcal C_i\cap\mathcal C_j\cap \orb_\varphi(p)=\emptyset$.
   Moreover, we deduce from Theorem
    \ref{thm:blancprin}  that
    $\varphi^\ell=\bigl(ax,by+P(x)\bigr)$, where $a,b\in \K^*$ and
    $P\in \K[x]$ --- notice that $b$ is not a root of unity different
    from $1$, since
    $\varphi^{n\ell}(p)= \bigr(0, b^ny_p+(1+\dots+b^{n-1})P(0)\bigr)$, and
    that if $\operatorname{char}(\K)\neq 0$, then also $b\neq 1$.

If $\psi\in \aut\bigl(\A^2,\orb_\varphi(p)\bigr)$, then
$\psi(p)=\varphi^i(p)$ for some $i$,  and therefore
$\varphi^{-i}\psi(p)=p \in \mathcal C_1$. Since $\varphi^{-i}\psi (\mathcal
C)=\mathcal C$, it follows that $\varphi^{-i}\psi (\mathcal C_1)=\mathcal
C_1$. Hence, in the notations of Theorem \ref{thm:blancprin} and Lemma
\ref{lem:isotropia}, 
\[
  \begin{split}
  \varphi^{-i}\psi\in G_p= &\ \Bigl\{
      \bigl(a'x,cy+S(x)\bigr) \mathrel{:} a',c\in\kk^*\,,\ S\in\kk[x],
      S(0)=-cy_p\Bigr\}\cong\\
      & \ 
  \operatorname{Ker}(R)\rtimes\bigl\{ (0,y)\mapsto (0,
  c y+(1-c)y_p)\mathrel{:}c\in\K^*\bigr\}
\end{split}
\]

Moreover, since $\orb_{\varphi^\ell}(p)=\orb_\varphi(p)\cap \mathcal C_1$,
  we deduce that  $\varphi^{-i}\psi \in
  \aut\bigl(\A^2,\orb_{\varphi^\ell}(p)\bigr)$ and therefore
  \[
    \psi\in \langle
  \varphi\rangle \bigl(G_{p}\cap
  \aut\bigl(\A^2, \orb_{\varphi^\ell}(p)\bigr)\bigr)= \bigl\{ \varphi^j\gamma\mathrel{:} j\in\Z\,,\ \gamma\in G_{p}\cap
  \aut\bigl(\A^2, \orb_{\varphi^\ell}(p)\bigr)\bigr\}.
\]

Analogously, since $\psi\bigl(\varphi^i(p)\bigr)=\varphi^{j_i}(p)$ for $i=1,\dots,\ell-1$, it follows that $\varphi^{i-j_i}\psi\in \aut(\A^2,\mathcal C_i)_{\varphi^i(p)}$. Hence 
\[
  \begin{split}
\psi \in & \langle\varphi\rangle \Bigl(\aut(\A^2,\mathcal
C_{i+1})_{\varphi^i(p)}\cap
\aut\bigl(\A^2,\orb_{\varphi^\ell}(\varphi^i(p))\bigr)\Bigr)=\\
& \langle\varphi\rangle \Bigl(\bigl(\varphi^iG\varphi^{-i}\bigl)_{\varphi^i(p)}\cap
\bigl(\varphi^i\aut\bigl(\A^2,\orb_{\varphi^\ell}(p)\bigr)\varphi^{-i}\bigr)\Bigr)=
\\
& \langle\varphi\rangle \Bigl(\varphi^iG_p\varphi^{-i}\cap
\bigl(\varphi^i\aut\bigl(\A^2,\orb_{\varphi^\ell}(p)\bigr)\varphi^{-i}\bigr)\Bigr)=
\\
& \langle\varphi\rangle \Bigl(\varphi^i\Bigl(G_p\cap \aut\bigl(\A^2,\orb_{\varphi^\ell}(p)\bigr)\Bigr)\varphi^{-i}\Bigr).
\end{split}
\]

  All in all, we have that
\[
  \psi\in  \bigcap_{i=0}^{\ell-1} \langle\varphi\rangle \Bigl(\varphi^i\Bigl(G_p\cap \aut\bigl(\A^2,\orb_{\varphi^\ell}(p)\bigr)\Bigr)\varphi^{-i}\Bigr)=\bigcap_{i=0}^{\ell-1} \langle\varphi\rangle \Bigl(\Bigl(G_p\cap \aut\bigl(\A^2,\orb_{\varphi^\ell}(p)\bigr)\Bigr)\varphi^{-i}\Bigr)
\]

 Let 
  $\psi\in \bigcap_{i=0}^{\ell-1} \langle\varphi\rangle
  \bigl(\varphi^i\bigl(G_p\cap
  \aut\bigl(\A^2,\orb_{\varphi^\ell}(p)\bigr)\bigr)\varphi^{-i}\bigr)$;
  in order to prove the first assertion of (c) we need to prove that   $\psi\in \aut\bigl(\A^2,\orb_\varphi(p)\bigr)$.
If $j\in \Z$, let $j=q\ell+r$ with $0\leq r<\ell$ and consider the decomposition $\psi= \varphi^r\gamma\varphi^{-r}$, with $\gamma\in G_p\cap
  \aut\bigl(\A^2,\orb_{\varphi^\ell}(p)\bigr)$. Then
\[
  \psi(\varphi^j(p))= \varphi^r\gamma\varphi^{-r+j}(p)=\varphi^r\gamma\varphi^{q\ell}(p).
  \]

  Since $\gamma\in \aut\bigl(\A^2,\orb_{\varphi^\ell}(p)\bigr)$, the
  assertion follows.

In order to prove the last assertion, we first observe that, up to conjugate with the translation of vector $(0,y_p)$,  we can assume
that $p=(0,0)$.  Let $\psi=\bigl(a'x, b'y+Q(x)\bigr)\in
\bigcap_{i=0}^{\ell-1}\varphi^{i} G_p\varphi^{-i}\setminus
\operatorname{Ker}(R)$. Then $Q(0)=0$ and,  since
$\psi\in \aut(\A^2,\orb_{\varphi^\ell}(p)\bigr)$,  there exists
$j\in\Z$ such that  
\[
\bigl(0,b'P(0)\bigr) = \psi \bigl(0,P(0)\bigr)=\psi \bigl(\varphi^\ell(p)\bigr)=\varphi^{j\ell}(p).
\]

If $j=1$, then $\psi=\bigl(a'x, y+Q(x)\bigr)\in
\operatorname{Ker}(R)$. Thus, up to work with $\psi^{-1}$ if necessary, we
can assume that $j>1$. We deduce that $b'=1+\dots+ b^{j-1}$; in
particular $\varphi$ is such that $1\neq 1+\dots+ b^{j-1}$  --- that
is, $b\neq 0$ is not a $(j-1)$-root of unity.

Analogously, we have that   $ \psi \bigl(0,
(1+b)P(0)\bigr)=\psi\bigl(\varphi^{2\ell}(p)\bigr) = \varphi^{m\ell}(p)$,
with $m\neq 2$.  Since $\varphi^{-\ell}=\bigl(a^{-1}x, b^{-1}y -
b^{-1}P(x)\bigr)$, we deduce that 
\[
  \bigr(0, (1+\dots+b^{j-1})(1+b)P(0)\bigr)=
  \psi\bigl(\varphi^{2\ell}(p)\bigr)=\begin{cases}
    \bigl(0,(1+\dots + b^{m-1})P(0)\bigr), \quad \mbox{if}\ m>2 \\
        \bigl(0,P(0)\bigr), \quad \mbox{if}\ m=1\\
          (0,0), \quad \mbox{if}\ m=0\\
   (0,-b^{-m}(1+\dots +b^m)P(0)), \quad \mbox{if}\ m< 0 \\
    \end{cases}
\]

For all four cases above we deduce a polynomial condition on
$b$. Since there are countably many such conditions, each of them with finite solutions, we
deduce that if $\K$ is uncountable there exist infinite values of $b$
for which no such a $\psi$ can be found. 
\end{enumerate}
\end{proof}

\begin{rem}
  Notice that  the proof of the last assertion of  Theorem \ref{thm_2}(c), we
  omitted additional conditions on $b$ that must be satisfied in order
  to allow the existence of $\psi\in
  \bigcap_{i=0}^{\ell-1}\varphi^iG_p\varphi^{-1}\setminus
  \operatorname{Ker}(R)$; namely, the conditions arising from the
  equations $\psi\bigl(\varphi^{j\ell}(p)\bigr)\in \orb_{\varphi^\ell}(p)$, for
  $j=3,\dots,\ell-1$. 
However,   Example \ref{exa:larecta} shows that there exists an
automorphism  $\varphi\in
  \aut\bigl(\A^2,\{x=0\}\bigr)$ such that $\aut\bigl(\A^2,
  \orb_\varphi(p)\bigr)\neq \langle \varphi\rangle
  \bigl(\bigcap_{i=0}^{\ell-1}\varphi^i\operatorname{Ker}(R)\varphi^{-i}\bigr)$, therefore, these 
   conditions may  be satisfied in particular examples.
  \end{rem}

\begin{rem}
In examples \ref{masejem} and \ref{masejem2} we exhibited cases where 
$\overline{\orb_\varphi(p)}$ is a fence and
$\aut\bigl(\A^2, \orb_\varphi(p)\bigr)$ is not algebraic. It is an
open question to 
prove that this is always the case  or to  provide an example 
where $\overline{\orb_\varphi(p)}$ is an union of curves of type (6)
but $\aut\bigl(\A^2, \orb_\varphi(p)\bigr)$ is  algebraic.

  \end{rem}

\end{document}